\newtheorem{theorem}{Teorema}
\newtheorem{lemma}[theorem]{Lemma}
\newtheorem{proposition}[theorem]{Proposition}
\newtheorem{corollary}[theorem]{Corollary}
\newtheorem{definition}{Definición}
\newtheorem{question}{Question}
\DeclareMathOperator{\rank}{rank}
\DeclareMathOperator{\supp}{supp}
\DeclareMathOperator{\Ann}{Ann}
\DeclareMathOperator{\wt}{wt}
\title{Three results on twisted $G-$codes and skew twisted $G-$codes}
\author{Alvaro Otero Sanchez, aos073@ual.es}
\date{December 2025}
\begin{document}

\maketitle

\begin{abstract}
In this paper, we solve an open question posed in the original paper on twisted skew group codes concerning the conditions under which a twisted skew group code is checkable. We also prove that all ideals of dimension three over a twisted group algebra are abelian group codes, thereby generalizing a previous result for group algebras. Finally, we establish a bound on the dimension and minimum distance of a twisted group code, and characterize the cases in which this bound is attained.
\end{abstract}

\medskip

\noindent \textbf{Key Words : } Coding theory; group ring; twisted group ring; twisted skew group ring

\medskip

\section{Introduction}
The foundational results in \cite{Shannon1948} and \cite{Hamming1950} started the research in Information Theory and Error Correcting code Theory. Since the early days of the field, several algebraic structures have been used to obtain linear codes with the necessary properties for each purpose. 

Let $G$ be a group and $K$ be a commutative ring. Then the group algebra $KG$ is the formal linear combination of elements of $G$, where product is given by the group operation in $G$. This ring was initially studied as they have a strong connection with representation theory, where linear representation of a group are in bijection with modules over the group algebras. 

In \cite{Berman1967} Berman proved that binary Reed Muller codes are ideals in a binary group algebra over an abelian group. In the following decade some mathematicians studied ideals over group algebras as they provide a rich framework to obtain new linear codes with interesting properties. 

This research area remains very active. For example, in \cite{BorelloDeLaCruzWillems2022}, the authors present the first bound relating the minimum distance, dimension of a linear code, and the underlying group. Another result connecting ring theory and coding theory is \cite{GARCIAPILLADO2019167}, where it is shown that group codes of dimensions two and three are commutative.

Recently, new generalizations of group algebras have been used in coding theory, such as  twisted group codes \cite{deLaCruzWillems2021}, skew group codes \cite{BoucherGeiselmannUlmer2007} or twisted skew G-codes \cite{Behajaina2024Twisted}. These structures provide interesting examples of linear codes. To illustrate this, the  ternary extended self-dual Golay code studied in \cite{deLaCruzWillems2021} are twisted group codes, or in \cite{Behajaina2024Twisted}, the authors present a $[6, 3, 4]_9$, where the authors point out that it is constructed in a simpler way that in \cite{Grassl}

In addition, there are still some open problems regarding the properties of such codes. In \cite{Behajaina2024Twisted}, the authors proved a sufficient condition for a twisted group code to be code checkable. However, they propose as a question if there is an equivalent of such result to the case of twisted skew group codes.

In this paper, some results regarding group codes or twisted group codes are generalized. To begin with, using an approach similar to that in the original paper, we give a positive answer to the question in \cite{Behajaina2024Twisted} about checkable codes in twisted skew group rings. After this, we give a sufficient condition for a twisted code to be permutation equivalent to a group code, which allows us to prove that all twisted codes of dimension 2 and 3 are abelian, and that may be useful in future with new twisted codes problems. Finally, in \cite{WillemsTwistedToAppear} a bound is proved on the dimension and minimal distance of an ideal over a twisted group algebra, and in \cite{Borello2022Ideals} group codes where the bound is reach are characterized. We present a generalization of this result to the case of twisted group codes.

\section{Preliminary results}
We start by recalling some basic background and the notation we will use throughout this work. For more information, see \cite{Karpilovsky1989}.

\begin{definition}
A crossed system is a quartet $(G,B,\sigma,\alpha)$ where $G$ is a group, $B$ is an $R$-algebra, being $R$ a ring and $\sigma$, $\alpha$ are applications
\begin{equation}
\sigma : G \rightarrow Aut(B) \hspace{1cm}
\alpha: G \times G \rightarrow U(B)
\end{equation}

\noindent where $Aut(B)$ is the  group of automorphisms of $B$ and $U(B)$ denotes the subgroup given by the units of $B$, and, if we write $\prescript{g}{}{{b}} = \sigma(g)(b)$, then the following identities must hold: 

\begin{enumerate} 
\item $\prescript{x}{}{{(\prescript{y}{}{{b}})}} = \alpha(x,y) \prescript{xy}{}{{b}} \alpha(x,y)^{-1}$$\forall b \in B, x,y \in G$
\item $ \alpha(x,y)\alpha(xy,z)=\prescript{x}{}{\alpha(y,z)} \alpha(x,yz)$ $\forall x,y,z \in G$
\item $ \alpha(x,e) = \alpha(e,x) = 1$ $\forall x \in G$
\end{enumerate}

A map $\alpha$ that holds the previous conditions is called a $2-cocycle$.
\end{definition}

For every crossed system we can define what is known as a crossed product or twisted skew group ring.

\begin{definition}
Let $(G,B,\sigma,\alpha)$ be a crossed system. The crossed product, also called twisted skew group ring, of $B$ over $G$, $B^\alpha [ G; \sigma]$, is the free $B-$module generated by the elements $g\in G$, denoted by $\overline{g}$, with the inner operation

\begin{equation}
(a \overline{x}) \cdot (b \overline{y}) = a \prescript{x}{}{b} \alpha(x,y) \overline{xy} \forall a,b \in B, x,y \in G
\end{equation}

\noindent and which is extended by linearity.
\end{definition}

In this work, we will deal with a twisted skew group ring over a finite field.

\section{Code Checkable}
Let $G$ be a group, let $p$ be a prime number, and $\mathbb{F}_{p^n}$ be the finite field of $p^n$ elements, and let $(G,\mathbb{F}_{p^n},\sigma,\alpha)$ be a crossed system. Then, we know that $Aut(\mathbb{F}_{p^n}) = Gal(\mathbb{F}_p,\mathbb{F}_{p^n})\simeq C_n$ the cyclic group of order $n$, generated by the Frobenious map. In particular, we have that $\mathbb{F}_p$ is fixed by the action of $G$ and $\mathbb{F}_{p^n}^\alpha[G,\sigma]$ is an algebra over its prime field.

Let $\hat{G}= \mathbb{F}_{p^n}^* \times G$ and let 
\begin{align*}
    \hat{G} \times \hat{G} & \longrightarrow \hat{G} \\
    ((a,g),(b,h)) & \longmapsto (a \prescript{g}{}{b} \alpha(g,h) , gh)
\end{align*}

\begin{proposition} \label{hatGisgroup}
    $\hat{G}$ with the previous operation is a group. 
\end{proposition}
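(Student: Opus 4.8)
The plan is to verify the three group axioms — associativity, existence of identity, existence of inverses — directly from the crossed-system identities (1), (2), (3) in the definition, exploiting the fact that here $B = \mathbb{F}_{p^n}$ is commutative, so that conjugation by units is trivial and identity (1) simplifies to $\prescript{x}{}{(\prescript{y}{}{b})} = \prescript{xy}{}{b}$, i.e. $\sigma$ is an honest group homomorphism $G \to \mathrm{Aut}(\mathbb{F}_{p^n})$. This simplification is what makes the computation tractable, so I would state it explicitly at the outset.

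First I would check associativity. Compute $((a,g)(b,h))(c,k)$ and $(a,g)((b,h)(c,k))$ and show both equal $(a \cdot \prescript{g}{}{b} \cdot \prescript{gh}{}{c} \cdot \alpha(g,h)\alpha(gh,k),\; ghk)$ in the first case and $(a \cdot \prescript{g}{}{b} \cdot \prescript{g}{}{(\prescript{h}{}{c})} \cdot \prescript{g}{}{\alpha(h,k)} \cdot \alpha(g,hk),\; ghk)$ in the second. Using $\prescript{g}{}{(\prescript{h}{}{c})} = \prescript{gh}{}{c}$ (commutativity applied to identity (1)) the two $c$-terms agree, and the two scalar expressions then coincide precisely because of the $2$-cocycle identity (2); note $\alpha$ takes values in $\mathbb{F}_{p^n}^*$ which is abelian, so there is no ordering subtlety. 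Second, I would identify the identity element as $(1, e)$ and verify $(1,e)(a,g) = (\prescript{e}{}{a}\,\alpha(e,g),\, g) = (a,g)$ and similarly on the other side, using identity (3) and $\sigma(e) = \mathrm{id}$. Third, for inverses I would propose that the inverse of $(a,g)$ is $\big(\prescript{g^{-1}}{}{(a^{-1})} \cdot \alpha(g^{-1},g)^{-1},\; g^{-1}\big)$ — this is forced by solving $(a,g)(b,g^{-1}) = (1,e)$ for $b$ — and then check that it also works as a left inverse, which uses identity (2) with a suitable substitution (e.g. $x = g^{-1}, y = g, z = g^{-1}$, or the standard consequence $\prescript{x}{}{\alpha(x^{-1},x)} = \alpha(x,x^{-1})$) together with the fact that $\sigma$ is multiplicative.

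Closure is immediate since $a\,\prescript{g}{}{b}\,\alpha(g,h) \in \mathbb{F}_{p^n}^*$ as a product of units. The main obstacle, such as it is, is bookkeeping: keeping track of where each $\sigma(g)$ acts and invoking the cocycle identity (2) in exactly the right form, particularly in the left-inverse verification where one must massage $\alpha(g^{-1},g)$ and $\alpha(g,g^{-1})$ into each other via (2). There is no conceptual difficulty — the proposition is essentially the statement that $\hat G$ is the group underlying the twisted group ring, well known in the theory of crossed products (cf. \cite{Karpilovsky1989}) — so I would keep the proof short, flag the commutativity reduction, and leave the most routine scalar manipulations to the reader.
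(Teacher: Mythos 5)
Your proposal is correct and follows essentially the same route as the paper: a direct verification of closure, associativity, identity and inverses, with the paper simply citing associativity of the crossed product where you expand the $2$-cocycle computation. Your inverse formula $\bigl(\prescript{g^{-1}}{}{(a^{-1})}\,\alpha(g^{-1},g)^{-1},\,g^{-1}\bigr)$ coincides with the paper's $\bigl(\prescript{g^{-1}}{}{}\bigl(\tfrac{1}{a\,\alpha(g,g^{-1})}\bigr),\,g^{-1}\bigr)$ via the relation $\prescript{g^{-1}}{}{\alpha(g,g^{-1})}=\alpha(g^{-1},g)$, so the two proofs agree in substance.
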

\begin{proof}
    It is close under the operation as $\mathbb{F}_{p^n}$ is an integral domain, and the associativity comes from the associativity of the product in a twisted skew group ring. 

    It has an identity element, given by $1_{\hat{G}} = (1_{\mathbb{F}_p^n}, 1_G)$ as for all $(x,y) \in \hat{G}$
    \begin{align*}
        (x,y) 1_{\hat{G}} = &  (x,y) (1_{\mathbb{F}_p^n}, 1_G) \\ = & (x \prescript{y}{}{1_{\mathbb{F}_p^n}}, y 1_G )  \\ = & (x,y)  \\ = & (  1_{\mathbb{F}_p^n}\prescript{1_G}{}{x},  1_G y)  \\ = & (1_{\mathbb{F}_p^n}, 1_G)  (x,y)  \\ = & 1_{\hat{G}} (x,y)
    \end{align*}
    Finally, the inverse element of $(x,y) \in \hat{G}$ is $(\prescript{y^{-1}}{}{} \left(\frac{1}{x\alpha(y,y^{-1})}\right),y^{-1})$, as 
    \begin{equation}
        (x,y)  (\prescript{y^{-1}}{}{} \left(\frac{1}{x\alpha(y,y^{-1})}\right),y^{-1}) = \left( \frac{x\alpha(y,y^{-1})}{x\alpha(y,y^{-1})} , y y^{-1}\right) = (1_{\mathbb{F}_{p^n}},1_G) = 1_{\hat{G}}
    \end{equation}
    and the other side is proven analogously.
\end{proof}
\begin{proposition}
    There is an epimorphism of $\mathbb{F}_p$-algebras over  $\mathbb{F}_p \hat{G} \longrightarrow \mathbb{F}_{p^n}^\alpha[G,\sigma]$.
\end{proposition}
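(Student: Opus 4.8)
The plan is to construct the algebra map explicitly on the natural basis of $\mathbb{F}_p \hat{G}$ and then check it respects the two multiplications. First I would define $\varphi : \mathbb{F}_p \hat{G} \longrightarrow \mathbb{F}_{p^n}^\alpha[G,\sigma]$ on the group-element basis of $\mathbb{F}_p \hat{G}$ by
\begin{equation*}
\varphi\bigl((a,g)\bigr) = a\,\overline{g} \qquad (a,g)\in\hat{G},
\end{equation*}
and extend $\mathbb{F}_p$-linearly. This is well defined because $\{(a,g) : a\in\mathbb{F}_{p^n}^*,\ g\in G\}$ is an $\mathbb{F}_p$-basis of $\mathbb{F}_p\hat{G}$ by definition of a group algebra, so there is no relation to preserve. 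The map is clearly $\mathbb{F}_p$-linear and sends $1_{\hat{G}}=(1_{\mathbb{F}_{p^n}},1_G)$ to $1_{\mathbb{F}_{p^n}}\overline{1_G}$, which is the identity of the crossed product by axiom~3 on $\alpha$.

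The core step is multiplicativity. It suffices to check it on basis elements: given $(a,g),(b,h)\in\hat{G}$, the product in $\hat{G}$ is $(a\,\prescript{g}{}{b}\,\alpha(g,h),\,gh)$, so
\begin{equation*}
\varphi\bigl((a,g)(b,h)\bigr) = a\,\prescript{g}{}{b}\,\alpha(g,h)\,\overline{gh},
\end{equation*}
while on the other side
\begin{equation*}
\varphi\bigl((a,g)\bigr)\varphi\bigl((b,h)\bigr) = (a\,\overline{g})(b\,\overline{h}) = a\,\prescript{g}{}{b}\,\alpha(g,h)\,\overline{gh}
\end{equation*}
directly from the defining operation of $B^\alpha[G;\sigma]$. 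So the two agree, and multiplicativity on all of $\mathbb{F}_p\hat{G}$ follows by bilinearity. (One should note here that $\varphi$ is not injective: it collapses the scalars, identifying $(a,g)$ and $\lambda\cdot(\lambda^{-1}a,g)$ for $\lambda\in\mathbb{F}_p^*$, and more generally the $\mathbb{F}_p$-span relations force a large kernel — but injectivity is not claimed.)

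Finally, surjectivity: the image contains every $a\,\overline{g}$ with $a\in\mathbb{F}_{p^n}^*$ and $g\in G$, hence also $0\cdot\overline{g}=0$, so it contains $a\,\overline{g}$ for all $a\in\mathbb{F}_{p^n}$ and all $g\in G$; since these elements $\mathbb{F}_p$-span $\mathbb{F}_{p^n}^\alpha[G,\sigma]$ (indeed they already span it as an $\mathbb{F}_{p^n}$-module, and a fortiori as an $\mathbb{F}_p$-module), $\varphi$ is onto. I do not expect a genuine obstacle here; the only thing requiring a little care is being explicit that $\hat{G}$, as an abstract group, has underlying set $\mathbb{F}_{p^n}^*\times G$ so that $\mathbb{F}_p\hat{G}$ has the stated basis, and that one must pass through $0$ to recover the non-unit scalar multiples in the image. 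The verification is otherwise a direct transcription of the crossed-product multiplication rule, which is exactly how $\hat{G}$ was defined.
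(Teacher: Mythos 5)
Your proof is correct and follows essentially the same route as the paper: the paper defines $\psi(a\,\overline{(b,c)}) = ab\,\overline{c}$ and checks multiplicativity on basis elements, which is exactly your map $\varphi((a,g)) = a\,\overline{g}$ extended $\mathbb{F}_p$-linearly. Your additional remarks on the identity element, surjectivity, and non-injectivity are fine but not needed beyond what the paper records.
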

\begin{proof}
The epimorphism $\psi :\mathbb{F}_p \hat{G}  \longrightarrow \mathbb{F}_{p^n}^\alpha[G,\sigma]$ given by $\psi(a \overline{(b,c)}) = ab \overline{c}$ for all $a\in \mathbb{F}_p$, $(b,c) \in \hat{G}$ and extended by linearity. It respects addition by definition, so we only have to prove that it respects product, and to do so we can focus on the basis elements

\begin{equation}
    \psi(\overline{(a,g)} \overline{(b,h)} ) = \psi(  (a \prescript{g}{}{b} \alpha(g,h) , gh) = a \prescript{g}{}{b} \alpha(g,h) \overline{gh} = ( a\overline{g} ) ( b \overline{h} ) = \psi(\overline{(a,g)}) \psi ( \overline{(b,h)}
\end{equation}
\end{proof}

To begin with, we recall the definition of checkable ideals.
\begin{definition}
A right ideal $I \le A$ is called checkable if there exists an element $v \in A$
such that
\[
I = \{\, a \in A \mid va = 0 \,\}
    = \operatorname{Ann}_r(v)
    = \operatorname{Ann}_r(Av).
\]
Here, checkable left ideals are defined analogously via the left annihilator of a principal
right ideal. A group algebra $K^\alpha G$ is called code-checkable if all right ideals of
$K^\alpha G$ are checkable. 
\end{definition}

In \cite{BorelloDeLaCruzWillems2022} the following result is proved.

\begin{corollary}[Corollary 3.2, \cite{BorelloDeLaCruzWillems2022}]
Let $\operatorname{char} K = p$ and let $B_0(G)$ be the principal $p$-block of $KG$. The following are equivalent:
\begin{enumerate}
    \item $G$ is $p$-nilpotent with a cyclic Sylow $p$-subgroup.
    \item $KG$ is a code-checkable group algebra.
    \item All right ideals in $B_0(G)$ are checkable.
\end{enumerate}
\end{corollary}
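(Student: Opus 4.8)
The plan is to translate ``code-checkable'' into a purely ring-theoretic property, reduce to the classification of Artinian principal ideal rings, and then read off the group-theoretic conditions.

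First I would use that $KG$, and every block of it, is a symmetric algebra, hence Frobenius: for a symmetric algebra $A$ the symmetrizing form induces a perfect pairing under which $\operatorname{Ann}_l(I) = I^{\perp}$ and $\operatorname{Ann}_r(L) = L^{\perp}$, so $I \mapsto \operatorname{Ann}_l(I)$ is an inclusion-reversing bijection from right ideals to left ideals with inverse $L \mapsto \operatorname{Ann}_r(L)$, and $\operatorname{Ann}_r(\operatorname{Ann}_l(I)) = I$, $\operatorname{Ann}_l(\operatorname{Ann}_r(L)) = L$. Combining this with $\operatorname{Ann}_r(v) = \operatorname{Ann}_r(Av)$ shows that a right ideal $I$ is checkable if and only if $\operatorname{Ann}_l(I)$ is a principal left ideal; since every left ideal occurs as some $\operatorname{Ann}_l(I)$, and the antiautomorphism $g \mapsto g^{-1}$ of $KG$ swaps left and right ideals while preserving principality, one gets: $KG$ is code-checkable $\iff$ every one-sided ideal of $KG$ is principal $\iff$ $KG$ is a principal ideal ring. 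The same argument inside the symmetric algebra $B_0(G)$ shows that statement (3) is equivalent to ``$B_0(G)$ is a principal ideal ring''.

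Next I would appeal to the structure theorem for Artinian principal ideal rings: an Artinian ring is a principal ideal ring exactly when it is a finite direct product of matrix rings $M_n(R)$ over Artinian uniserial rings $R$. The point needing care is that $M_n$ of a uniserial ring really is a principal ideal ring: over a uniserial ring every submodule of a free module of rank $n$ is generated by at most $n$ elements, and then the left ideal of $M_n(R)$ consisting of all matrices whose rows lie in a fixed submodule $W \le R^n$ is generated by the single matrix whose rows form a minimal generating set of $W$ (right ideals being symmetric); in particular Morita equivalence by itself does \emph{not} preserve the principal-ideal property, so one has to use this explicit matrix picture rather than merely passing to a basic algebra. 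The upshot is that $KG$ is code-checkable iff every block of $KG$ is a matrix ring over a uniserial ring, and in particular has a unique simple module; similarly for $B_0(G)$ in (3).

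Finally I would carry out the group-theoretic translation. For $(3)\Rightarrow(1)$: if $B_0(G) \cong M_n(R)$ with $R$ uniserial local then $B_0(G)$ has a unique simple module, so by the classical theorem that $G$ is $p$-nilpotent if and only if its principal $p$-block has a unique simple module, $G$ is $p$-nilpotent; hence $B_0(G) \cong KP$ for a Sylow $p$-subgroup $P$ (Fong), and $KP \cong M_n(R)$ with $P$ a $p$-group forces $n = 1$ and $KP$ uniserial, i.e.\ the augmentation ideal of $KP$ is principal, i.e.\ $P$ is cyclic. For $(1)\Rightarrow(2)$: write $G = N \rtimes P$ with $N = O_{p'}(G)$ and $P$ cyclic; then $KN$ is semisimple and, by Clifford theory together with Fong's theorem, every block of $KG$ is isomorphic to $M_m(K'^{\gamma}[\bar T])$ for a subgroup $\bar T \le P$ (hence cyclic), a field $K' \supseteq K$, and a class $\gamma \in H^2(\bar T,(K')^{\times})$; since $\bar T$ is a $p$-group and $(K')^{\times}$ has order coprime to $p$, $\gamma$ is trivial, so each block is $M_m(K'C_{p^j}) = M_m(K'[x]/(x^{p^j}))$, a matrix ring over a uniserial ring, hence a principal ideal ring, and a finite product of such is again one; so $KG$ is code-checkable. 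The implication $(2)\Rightarrow(3)$ is immediate (restrict to the direct factor $B_0(G) = KGe$: if $I \subseteq B_0(G)$ equals $\operatorname{Ann}_r^{KG}(v)$ then $I = \operatorname{Ann}_r^{B_0(G)}(ve)$). The main obstacle I anticipate is the ring-theoretic step — correctly proving/invoking that Artinian principal ideal rings are exactly products of matrix rings over uniserial rings and that the block/basic-algebra reduction must be done through the explicit matrix picture because principality is not Morita invariant; the two classical inputs (``$\ell(B_0(G))=1 \iff G$ is $p$-nilpotent'' and the shape of Fong's theorem) I would simply cite.
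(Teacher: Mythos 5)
This corollary is not proved in the paper at all: it is quoted verbatim from \cite{BorelloDeLaCruzWillems2022} as background for Section~3, and the only piece of its proof machinery that the paper actually reuses is the Frobenius/symmetric-algebra duality (a right ideal is checkable if and only if its left annihilator is a principal left ideal), which resurfaces in the paper's later proposition that code-checkable implies all left ideals principal. So there is no internal proof to compare yours with, and your proposal has to be judged on its own merits.

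On those merits the outline is essentially sound: the reduction ``code-checkable $\iff$ every one-sided ideal principal'' via the double-annihilator property and the antiautomorphism $g\mapsto g^{-1}$ is correct, the Asano--K\"othe classification of Artinian principal ideal rings as products of matrix rings over local uniserial rings is the right tool (and you are right that one cannot just invoke Morita invariance), and the group-theoretic inputs (Brauer's criterion that $G$ is $p$-nilpotent iff $B_0(G)$ has a unique simple module, $B_0(G)\cong K[G/O_{p'}(G)]$ for $p$-nilpotent $G$, Fong--K\"ulshammer for $(1)\Rightarrow(2)$) are correctly identified and citable. Three points need repair, though none is fatal. First, your reason for the vanishing of the twist, ``$(K')^{\times}$ has order coprime to $p$,'' is false for infinite fields; the correct argument is that the finitely many cocycle values generate a finite, hence cyclic, $p'$-subgroup $C\le (K')^{\times}$, and $H^2(\bar T,C)=0$ because it is annihilated by the coprime numbers $|\bar T|$ and $|C|$. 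Second, the form of Fong--K\"ulshammer you quote, $M_m(K'^{\gamma}[\bar T])$, is the split version; over an arbitrary field of characteristic $p$ a block of a $p$-nilpotent group is a matrix ring over a crossed product $K'^{\gamma}[\bar T;\sigma]$ in which the cyclic $p$-group $\bar T$ may act on $K'$ by field automorphisms, so you must additionally show that such a crossed product is again a matrix ring over a uniserial ring (it is: after trivializing $\gamma$ as above, Galois descent gives $K'[\bar T;\sigma]\cong M_{[K':F]}(F[\bar T_0])$ with $F$ the fixed field and $\bar T_0$ the kernel of the action), a step your sketch omits. Third, both in $(2)\Rightarrow(3)$ and in running the annihilator/antipode argument inside $B_0(G)$ you implicitly need that $g\mapsto g^{-1}$ fixes the principal block idempotent, which is true but should be stated. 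With these patches the proposed proof goes through; it is a heavier, more structural route than strictly necessary (the original reference can also lean on the classical characterization of group algebras that are principal ideal rings), but it is coherent.
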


In \cite{Behajaina2024Twisted} a question regarding when twisted-skew G-code are code checkable is proposed.
\begin{proposition}[{\cite[Proposition~1.10]{Behajaina2024Twisted}}]
Let $\operatorname{char} K = p$. If $G$ is $p$-nilpotent with a cyclic Sylow $p$-subgroup,
then every left (right) ideal in a twisted group algebra is principal.
\end{proposition}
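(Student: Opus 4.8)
The plan is to reduce, via Clifford theory, to a crossed product of a simple algebra by a cyclic $p$-group. Since $G$ is $p$-nilpotent, I would fix a normal $p$-complement $N \trianglelefteq G$ (so $p \nmid |N|$) and a cyclic Sylow $p$-subgroup $P$, with $G = N \rtimes P$. Restricting the $2$-cocycle to $N$ gives a twisted group subalgebra $A := K^{\alpha}N \subseteq K^{\alpha}G$, and picking a basis element $\overline{g}$ for each coset of $N$ presents $K^{\alpha}G$ as a crossed product $A * P$ of the ring $A$ by the group $P$. Since $\operatorname{char}K = p \nmid |N|$, the Maschke-type theorem for twisted group algebras gives that $A$ is semisimple, say $A = \prod_i A_i$ with $A_i \cong M_{n_i}(D_i)$ simple Artinian. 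Conjugation by the $\overline{g}$ permutes the central primitive idempotents, so I would group the $A_i$ into $P$-orbits; the sum $e_{\mathcal O}$ of the idempotents over an orbit $\mathcal O$ is central in $K^{\alpha}G$, and $K^{\alpha}G = \prod_{\mathcal O} e_{\mathcal O}K^{\alpha}G$, reducing the problem to a single block.

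For a block, I would pick $A_1$ in the orbit with central idempotent $f$ and (necessarily cyclic) stabiliser $Q = \{\, g \in P : \overline{g}f\overline{g}^{-1} = f \,\}$. Transitivity of the $P$-action makes $f$ a full idempotent, so $e_{\mathcal O}K^{\alpha}G$ is Morita equivalent to $f(K^{\alpha}G)f = \bigoplus_{g \in Q} A_1\overline{g} = A_1 * Q$, a crossed product of the simple algebra $A_1 = M_{n_1}(D_1)$ by the cyclic $p$-group $Q$. Next I would strip off the matrix part: every ring automorphism of $M_{n_1}(D_1)$ is an inner automorphism composed with one acting entrywise through $\operatorname{Aut}(D_1)$, so after rescaling each $\overline{g}$ ($g \in Q$) by a suitable $M_{n_1}(D_1)$-unit the conjugation action becomes entrywise and the $2$-cocycle becomes scalar-valued in $Z(D_1)^{\times}$; this identifies $A_1 * Q$ with $M_{n_1}(\Gamma)$, where $\Gamma = D_1 *^{\gamma} Q$ is a genuine twisted group ring of the division ring $D_1$ by $Q$. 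Writing $Q = \langle g \rangle$ of order $p^m$, the element $\overline{g}^{\,p^m}$ is then a central unit $u$ and $\Gamma \cong D_1[t;\theta]/(t^{p^m}-u)$, a quotient of a skew polynomial ring over a division ring by a two-sided ideal generated by a central element.

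From here the endgame: since $\theta$ is an automorphism, $D_1[t;\theta]$ is a left and right principal ideal domain, so its quotient $\Gamma$ is a left and right principal ideal ring, and moreover every submodule of $\Gamma^{n}$ needs at most $n$ generators (lift to $D_1[t;\theta]^{n}$, where a finitely generated submodule is free of rank $\le n$, then reduce). Hence every left ideal of $M_{n_1}(\Gamma)$ is principal: under the equivalence between left $M_{n_1}(\Gamma)$-modules and left $\Gamma$-modules it corresponds to a $\Gamma$-submodule of $\Gamma^{n_1}$, i.e.\ to a quotient of $\Gamma^{n_1} \cong e_{11}M_{n_1}(\Gamma)$, which forces it to be cyclic. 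Transporting this conclusion back through the Morita equivalences — again using that $\Gamma$ is a homomorphic image of a noncommutative principal ideal domain, so that its finitely generated modules split into boundedly many cyclic summands — I would conclude that every left ideal of the block, hence of $K^{\alpha}G = \prod_{\mathcal O} e_{\mathcal O}K^{\alpha}G$, is principal; the argument for right ideals is identical.

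The hard part will be that last transport step: Morita equivalence preserves ``finitely generated'' but not ``cyclic'', so one must argue at the level of minimal numbers of generators of modules, and the feature that makes it go through is precisely that the basic object $\Gamma$ is not merely uniserial but a quotient of a noncommutative PID, which bounds the number of generators of submodules of $\Gamma^{n}$ and of the progenerators appearing in the reductions $e_{\mathcal O}K^{\alpha}G \to A_1 * Q$ and $M_{n_1}(\Gamma) \to \Gamma$. A secondary point to verify is that the Maschke-type theorem and the block decomposition are valid over an arbitrary field of characteristic $p$ with no splitting-field hypothesis, the division-algebra components $D_i$ playing no essential role.
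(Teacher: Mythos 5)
Your route is genuinely different from the one the paper relies on. The paper never attacks this proposition by structure theory: it quotes it, and its own machinery (Section 3) proves the corresponding statement by passing to the covering group $\hat G=\mathbb{F}_{p^n}^*\times G$, showing that $p$-nilpotency and cyclicity of the Sylow $p$-subgroup transfer between $G$ and $\hat G$, pushing ideals through the algebra epimorphism $\mathbb{F}_p\hat G\to\mathbb{F}_{p^n}^\alpha[G,\sigma]$, invoking the Borello--de la Cruz--Willems characterization of code-checkable group algebras as a black box, and finally converting checkability of right ideals into principality of left ideals via the double-annihilator property of Frobenius algebras. Your argument instead works inside the algebra: normal $p$-complement $N$, Maschke for $K^\alpha N$, orbits of blocks, corner/Morita reduction to $A_1*Q$, and the presentation of $D_1*^{\gamma}Q$ as a quotient of the skew polynomial ring $D_1[t;\theta]$, a noncommutative principal ideal domain. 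This is heavier but more self-contained, it makes visible exactly where each hypothesis enters ($p$-nilpotency for the complement, cyclicity of $P$ for the one-variable skew presentation), and it does not require the base field to be finite, whereas the covering-group trick needs $K^*$ finite so that $\hat G$ is a finite group; on the other hand the paper's route is far shorter and also covers a nontrivial Galois action $\sigma$.

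There is one real gap, the one you flag yourself, plus two small slips. The transport of principality back through Morita equivalence is incomplete as written, but it can be closed without any general transfer principle: the orbit idempotents $f_1,\dots,f_r$ ($r=[P:Q]$) are conjugate by the units $\overline{g_i}$ and sum to $e_{\mathcal O}$, so the right modules $f_iB$ are pairwise isomorphic and the block $B=e_{\mathcal O}K^\alpha G$ is \emph{isomorphic}, not merely Morita equivalent, to $M_r(fBf)\cong M_{rn_1}(\Gamma)$; then your bounded-generation argument applies directly to $B$: a left ideal of $M_m(\Gamma)$ corresponds to a $\Gamma$-submodule $U\le\Gamma^m$ of rows, $U$ needs at most $m$ generators because it lifts to a submodule of the free module $D_1[t;\theta]^m$ over a PID, and the matrix whose rows are those generators generates the ideal. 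The two slips: after normalizing the $\overline{g}$ ($g\in Q$) so that conjugation fixes the standard matrix units, the cocycle values lie in $D_1^\times\cdot I$ (the centralizer of the matrix units), not necessarily in $Z(D_1)^\times$ --- this is harmless since you only need a crossed product of $D_1$ by $Q$; and $t^{p^m}-u$ is in general only a \emph{normal} element of $D_1[t;\theta]$ (one has $\theta(u)=u$ and $\theta^{p^m}$ equal to conjugation by $u$), not a central one --- normality is what holds and is all you need for the two-sided quotient. With these repairs the proof goes through; over a finite base field Wedderburn's little theorem even makes every $D_i$ commutative, simplifying the normalization step, consistent with your closing remark.
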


\begin{question}[{\cite[Question~1.11]{Behajaina2024Twisted}}]
Is there an extension of Proposition~1.10 to left ideals in twisted skew group rings?
\end{question}

To present the answer, we begin with some previous results. First of all, note that for all $(a,g) \in \hat{G}$ we have that
\begin{equation}
    (a,g) ^b = ( \prod_{k=0}^{b-1} \alpha(g,g^k)\prescript{g^k}{}{a}, g^b)
\end{equation}

\begin{lemma}
    $G$ is $p$-nilpotent if and only if $\hat{G}$ is  $p$-nilpotent
\end{lemma}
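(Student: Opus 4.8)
The plan is to exploit the short exact sequence of groups that is implicit in the construction of $\hat G$. Indeed, the map $\pi:\hat G\to G$ given by $(a,g)\mapsto g$ is a group homomorphism (this is immediate from the definition of the operation on $\hat G$, since the second coordinate multiplies by the group law of $G$), it is surjective, and its kernel is $N=\{(a,1_G):a\in\mathbb F_{p^n}^*\}$. Because $\sigma(1_G)=\mathrm{id}$ and $\alpha(1_G,1_G)=1$, the induced operation on $N$ is just multiplication in $\mathbb F_{p^n}^*$, so $N\cong\mathbb F_{p^n}^*$ is cyclic of order $p^n-1$, which is coprime to $p$. Thus we have a central-free but in any case normal subgroup $N$ of order prime to $p$ with $\hat G/N\cong G$.

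Next I would recall the standard characterization: a finite group $H$ is $p$-nilpotent iff it has a normal $p$-complement, i.e. a normal subgroup $M\trianglelefteq H$ with $|M|$ coprime to $p$ and $H/M$ a $p$-group; equivalently $O_{p'}(H)$ has $p$-power index. For the direction "$\hat G$ $p$-nilpotent $\Rightarrow$ $G$ $p$-nilpotent": if $M\trianglelefteq\hat G$ is a normal $p$-complement, then $\pi(M)\trianglelefteq G$ has order prime to $p$ (being a quotient of $M$) and $G/\pi(M)$ is a quotient of $\hat G/M$, hence a $p$-group; so $\pi(M)$ is a normal $p$-complement in $G$. For the converse, if $L\trianglelefteq G$ is a normal $p$-complement, then $\pi^{-1}(L)\trianglelefteq\hat G$ fits in an extension $1\to N\to\pi^{-1}(L)\to L\to1$ with both $|N|$ and $|L|$ coprime to $p$, so $|\pi^{-1}(L)|$ is coprime to $p$, while $\hat G/\pi^{-1}(L)\cong G/L$ is a $p$-group; hence $\pi^{-1}(L)$ is a normal $p$-complement in $\hat G$.

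The only genuinely substantive point — and the step I would be most careful about — is verifying that $\pi$ is a homomorphism with the stated kernel, which rests on the cocycle normalization $\alpha(x,e)=\alpha(e,x)=1$ and on $\sigma(e)=\mathrm{id}_B$ (the latter following from condition (1) of the crossed system with $x=y=e$, or being part of the standing assumptions); everything after that is the routine fact that $p$-nilpotency is inherited by quotients and is detected through a normal $p$-complement, together with the observation $\gcd(p,\,p^n-1)=1$. I would also remark that one can phrase this more slickly: $\hat G$ is $p$-nilpotent iff $O^{p'}(\hat G)$ is a $p$-group, and since $N\le O_{p'}(\hat G)$ always, $O^{p'}$ commutes with passing to $\hat G/N\cong G$; but the explicit normal-$p$-complement argument above is cleaner to write out and needs no block-theoretic machinery.
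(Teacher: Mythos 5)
Your proof is correct, and it takes a cleaner, more structural route than the paper. The paper works with the elementwise characterization of $p$-nilpotency: in the forward direction it explicitly identifies the set of elements of $\hat{G}$ of order coprime to $p$ as $\mathbb{F}_{p^n}^*\times H$ (using the formula for powers and inverses in $\hat{G}$), and in the backward direction it uses the projection $\pi$ to show the $p'$-elements of $G$ form a subgroup. You instead package everything into the short exact sequence $1\to \mathbb{F}_{p^n}^*\to\hat{G}\xrightarrow{\ \pi\ } G\to 1$, noting the kernel has order $p^n-1$ coprime to $p$, and then transfer a normal $p$-complement in both directions: pushing it forward along the surjection, and pulling it back using that an extension of a $p'$-group by a $p'$-group is a $p'$-group while $\hat{G}/\pi^{-1}(L)\cong G/L$ stays a $p$-group. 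This avoids the paper's explicit computations with the twisted operation (apart from the one-line checks that $\pi$ is a homomorphism and that $\gcd(p,p^n-1)=1$), at the mild cost of invoking the standard equivalence between ``the $p'$-elements form a subgroup'' and ``there is a normal $p$-complement''; the paper's version, by contrast, hands you an explicit description of the $p'$-Hall subgroup of $\hat{G}$, which it reuses in the subsequent lemma on cyclic Sylow $p$-subgroups.
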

\begin{proof}
$(\Longrightarrow)$

    Suppose that $G$ is $p$-nilpotent, then all elements of order coprime with $p$ form a subgroup $H\leq G$. Then the set $\hat{H}=\mathbb{F}_{p^n}^* \times H$ is precisely the set of all elements whose order is coprime $p$ of $\hat{G}$.

    First, let $(a,b)\in \hat{G}$ of order a multiple of $p$ i.e. $(a,b) = p^c h$ with $gcd(h,p)=1$. Then $(a,b)^h = (a',b^h)$ is of order $p^c$, so $b^h$ is of order a power of $p$, and $b\not \in H$.

    In addition, if $(a,b) \in \hat{G}$ is of order coprime with $p$, by the same argument $b$ is of order coprime with $p$ and $(a,b) \in \hat{H}$.

    We have shown that $\hat{H}=\mathbb{F}_{p^n}^* \times H$ is the set of all elements of order coprime with $p$ of $\hat{G}$. The fact that it is a subgroup is proven analogously to \ref{hatGisgroup} using the fact that $H$ is a subgroup.

    $(\Longleftarrow)$

    Let $\pi : \hat{G} \longrightarrow G$ the projection on the second component. It is clearly a epimorphism of groups. Let $\hat{H}$  be the group form by all elements of order coprime with $p$. We will show that $\pi(\hat{H}) = H\leq G$ is a group form by all elements whose order coprime with $p$ in $G$. 
    
    First of all, as $\hat{H}$ is a subgroup, $H$ is also a group, so we only have to prove the $p-$nilpotent part. If $a\in G$ has order corpime with $p$, then $(1,a)$ has order coprime with $p$, so $a = \pi((1,a))  \in \pi(\hat{H}) = H$. Now, if $a\in H$, then it is the image of $(1,a) \in \hat{H}$, and $ord(a) = ord((1,a))$ so it has order coprime with $p$. 
\end{proof}

\begin{lemma}
$G$ has a cyclic Sylow $p$-subgroup if and only if $\hat{G}$ also. 
\end{lemma}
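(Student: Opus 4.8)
The plan is to use the projection $\pi : \hat{G} \longrightarrow G$ onto the second coordinate (already introduced in the proof of the previous lemma), together with the fact that its kernel has order coprime to $p$. The whole argument then reduces to the standard relationship between Sylow $p$-subgroups of a group and of a quotient by a normal subgroup of order prime to $p$.

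First I would describe $N := \ker \pi = \{(a,1_G) : a \in \mathbb{F}_{p^n}^*\}$ explicitly. Using that $\sigma(1_G)$ is the identity automorphism and that condition (3) on the $2$-cocycle gives $\alpha(1_G,1_G)=1$, one checks $(a,1_G)\cdot(b,1_G) = (ab,1_G)$, so $N$ is a normal subgroup of $\hat{G}$ isomorphic to the cyclic group $\mathbb{F}_{p^n}^*$; in particular $|N| = p^n-1$, which is coprime to $p$. Since $\hat{G} = \mathbb{F}_{p^n}^*\times G$ as a set, $|\hat{G}| = (p^n-1)\,|G|$, so Sylow $p$-subgroups of $\hat{G}$ and of $G$ have the same order $p^a$, where $p^a$ is the $p$-part of $|G|$.

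The key observation is that $\pi$ restricted to any $p$-subgroup $P \le \hat{G}$ is injective: $P\cap N$ is simultaneously a $p$-group and a subgroup of $N$, so $P\cap N = \{1_{\hat{G}}\}$, and hence $\pi$ induces an isomorphism from $P$ onto the $p$-subgroup $\pi(P) \le G$. From here both directions are immediate. If $G$ has a cyclic Sylow $p$-subgroup and $P$ is a Sylow $p$-subgroup of $\hat{G}$, then $\pi(P)$ has order $p^a$, hence is a (cyclic) Sylow $p$-subgroup of $G$, so $P\cong\pi(P)$ is cyclic. Conversely, if $\hat{G}$ has a cyclic Sylow $p$-subgroup and $Q$ is a Sylow $p$-subgroup of $G$, then $\pi^{-1}(Q)$ is a subgroup of $\hat{G}$ of order $(p^n-1)p^a$; a Sylow $p$-subgroup $P$ of $\pi^{-1}(Q)$ has order $p^a$ and is therefore a Sylow $p$-subgroup of $\hat{G}$, so $P$ is cyclic, and since $\pi(P)\le Q$ with $|\pi(P)|=|P|=|Q|$ we get $Q = \pi(P)\cong P$ cyclic.

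I do not anticipate a real obstacle; the only step needing attention is verifying that $N=\ker\pi$ is genuinely a subgroup of order $p^n-1$ (so that it is a $p'$-group), which follows directly from the cocycle conditions. An alternative route would be to invoke the Schur--Zassenhaus theorem to split $\hat{G} = N\rtimes L$ with $L\cong G$ and compare Sylow subgroups of $L$ with those of $\hat{G}$ and of $G$, but working with preimages of Sylow subgroups under $\pi$ keeps the argument self-contained.
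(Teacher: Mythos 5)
Your proof is correct and follows essentially the same route as the paper: both rely on the projection $\pi:\hat{G}\to G$ and on the fact that the elements $(a,1_G)$ form a subgroup of order $p^{n}-1$ coprime to $p$, so that $\pi$ is injective on $p$-subgroups and Sylow $p$-subgroups of $\hat{G}$ and $G$ correspond. Your formulation via $N=\ker\pi$ and $P\cap N=\{1\}$ is simply a cleaner structural packaging of the element-wise computation the paper carries out inside $\hat{P}$.
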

\begin{proof}
To begin with, note that $|\hat{G}| = | G | |\mathbb{F}_{p^n}^*| = |G| (p^n-1)$, so the maximal power of $p$ that divides $|\hat{G}|$ is also the maximal power that divides $|G|$. Also, from the proof of the previous theorem, if $(a,b)\in \hat{G}$ is of order $p^n$ then $b$ is of order $p^n$.  

$(\Longleftarrow)$

If $\hat{G}$ has a cyclic Sylow $p-$subgroup $\hat{P}$, then $\pi(\hat{P})\leq G$ is also cyclic, as it is the homomorphic image of a cyclic group. Let $(a,b)$ be it generator. Then $b$ is of order $p^n$ and $b\in \pi(\hat{P})$, so $|<b>|=p^n$ and therefore is a cyclic  Sylow $p-$subgroup of $G$.

$(\Longrightarrow)$

Suppose that $G$ has a cyclic Sylow $p-$subgroup $P$. Then all $p$-subgroups of $G$ are cyclic from the first Sylow theorem. Now, let $\hat{P}$ be the $p$-Sylow subgroup of $\hat{G}$. 

Let $(a,g),(b,g) \in \hat{P}$ with $a\not = b \in \mathbb{F}_{p^n}$. Then $(a,g) (b,g)^{-1} \in \hat{P}$, so
\[
(a,g) (b,g)^{-1} = (a,g) \left( \prescript{g^{-1}}{}{\left(\frac{1}{b\alpha(g,g^{-1})}\right)}, g^{-1} \right) = (ab^{-1},1_G) 
\]
So $(ab^{-1},1_G) \in \hat{P}$, but $ord((ab^{-1},1_G))$ is coprime with $p$, so the only possibility is $ord((ab^{-1},1_G))=1$ so $a=b$

Now, by hypothesis $\pi(\hat{P})$ is cyclic, and generated by some element $h\in G$ of order $p^t$ with $t\leq n$. Then from the previous computations, there is a unique $a_{i}$ for all $i=0,\cdots, t$  such that $(a_i,h^i)\in \hat{P}$. As $\pi$ is an epimorphism, then all elements of $\hat{P}$ are of this form, $t=n$ and $\hat{G}$ has a cyclic Sylow $p$-group
\end{proof}

\begin{proposition}
    If $G$ is $p$-nilpotent with a cyclic Sylow $p$-subgroup then The twisted group ring $\mathbb{F}_{p^n}^\alpha[G,\sigma]$ is code checkable 
\end{proposition}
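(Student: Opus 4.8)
The plan is to transfer the code-checkable property from the ordinary group algebra $\mathbb{F}_p\hat{G}$ down to the twisted skew group ring $\mathbb{F}_{p^n}^\alpha[G,\sigma]$ via the epimorphism $\psi$ constructed above. By the two preceding lemmas, if $G$ is $p$-nilpotent with a cyclic Sylow $p$-subgroup, then so is $\hat{G}$; hence by Corollary~3.2 of \cite{BorelloDeLaCruzWillems2022} the group algebra $\mathbb{F}_p\hat{G}$ is code-checkable, i.e.\ every right (and left) ideal of $\mathbb{F}_p\hat{G}$ is checkable. The goal is then to show that checkability descends along the surjective $\mathbb{F}_p$-algebra homomorphism $\psi: \mathbb{F}_p\hat{G} \twoheadrightarrow \mathbb{F}_{p^n}^\alpha[G,\sigma]$.

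First I would record the key structural fact about $\psi$: its kernel is spanned by elements of the form $\overline{(a,g)} - \overline{(b,g)}\cdot(\text{scalar})$ reconciling the two ways of writing $c\,\overline{g}$ with $c\in\mathbb{F}_{p^n}$, and in particular $\mathbb{F}_{p^n}^\alpha[G,\sigma] \cong \mathbb{F}_p\hat{G}/\ker\psi$ as $\mathbb{F}_p$-algebras. Since $\psi$ is surjective, right ideals of the quotient correspond bijectively (via $\psi$ and $\psi^{-1}$) to right ideals of $\mathbb{F}_p\hat{G}$ containing $\ker\psi$. Given an arbitrary right ideal $J \le \mathbb{F}_{p^n}^\alpha[G,\sigma]$, set $I = \psi^{-1}(J)$, a right ideal of $\mathbb{F}_p\hat{G}$ containing $\ker\psi$. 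By code-checkability of $\mathbb{F}_p\hat{G}$ there is $v$ with $I = \operatorname{Ann}_r(v)$. I would then show $J = \operatorname{Ann}_r(\psi(v))$ in $\mathbb{F}_{p^n}^\alpha[G,\sigma]$: the inclusion $\subseteq$ is immediate since $\psi(v)\psi(a) = \psi(va) = 0$ for $a\in I$, and $\psi$ is surjective onto $J$; for $\supseteq$, if $\psi(v)\overline{w} = 0$ for some $w$, lift $\overline{w}$ to $\tilde{w}$ with $\psi(\tilde{w}) = \overline{w}$, so $\psi(v\tilde{w}) = 0$, i.e.\ $v\tilde{w}\in\ker\psi \subseteq I = \operatorname{Ann}_r(v)$, hence $v(v\tilde{w}) = 0$; one needs to massage this into $v\tilde{w} \in \operatorname{Ann}_r(v) = I$ directly — in fact $v\tilde w\in\ker\psi$ already lies in $I$, so $\tilde w\in I$ and $\overline w = \psi(\tilde w)\in J$.

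The main obstacle I anticipate is the last step: checkability is genuinely a statement about \emph{principal} right ideals ($I = \operatorname{Ann}_r(v) = \operatorname{Ann}_r(Av)$), and one must be careful that the annihilator of the image $\psi(v)$ computed inside the quotient really equals the image of $\operatorname{Ann}_r(v)$, rather than something larger. This is where the condition $\ker\psi \subseteq I$ is used essentially: for $a \in \mathbb{F}_p\hat{G}$ one has $\psi(v)\psi(a) = 0 \iff va \in \ker\psi \iff va \in I$ (using $\ker\psi\subseteq I$) $\iff a \in \operatorname{Ann}_r(v) = I$ (the forward direction here needs that $I$ is $\operatorname{Ann}_r(v)$, and $va\in I=\operatorname{Ann}_r(v)$ forces $v(va)=0$; combined with $a\in I$ being what we want, one argues: $a\in\psi^{-1}(\operatorname{Ann}_r(\psi v))$ iff $va\in\ker\psi\subseteq I$, so $a\in\psi^{-1}(J')$ where $J'=\operatorname{Ann}_r(\psi v)$, giving $\psi^{-1}(J') = \psi^{-1}(J)$ hence $J'=J$ by surjectivity). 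So the clean way to organize it is: show $\psi^{-1}(\operatorname{Ann}_r(\psi(v))) = \operatorname{Ann}_r(v)$ whenever $\ker\psi\subseteq\operatorname{Ann}_r(v)$, then apply $\psi$.

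Finally I would remark that the identical argument applies verbatim to left ideals, since Corollary~3.2 gives checkability of all right ideals of $\mathbb{F}_p\hat{G}$ and the left-sided statement follows by the anti-automorphism or simply by noting the cited corollary is left-right symmetric in its hypotheses; hence $\mathbb{F}_{p^n}^\alpha[G,\sigma]$ is code-checkable, answering Question~1.11 affirmatively.
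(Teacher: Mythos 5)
Your strategy coincides with the paper's: push checkability from $\mathbb{F}_p\hat G$ down through $\psi$ by pulling a right ideal $J$ of $\mathbb{F}_{p^n}^\alpha[G,\sigma]$ back to $I=\psi^{-1}(J)$, writing $I=\Ann_r(v)$, and showing $J=\Ann_r(\psi(v))$. The inclusion $J\subseteq\Ann_r(\psi(v))$ is fine, but your argument for the reverse inclusion is where the proposal breaks. From $\psi(v)\psi(\tilde w)=0$ you only get $v\tilde w\in\ker\psi\subseteq I=\Ann_r(v)$, i.e. $v^2\tilde w=0$; the jump ``$v\tilde w\in I$, so $\tilde w\in I$'' is a non sequitur, since a right ideal is not closed under cancelling $v$ on the left. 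Moreover the lemma you propose as the clean formulation --- $\psi^{-1}\bigl(\Ann_r(\psi(v))\bigr)=\Ann_r(v)$ whenever $\ker\psi\subseteq\Ann_r(v)$ --- is false for a general surjection of algebras: take $B=K[x]/(x^2)$, $\psi\colon B\to B/(x)\cong K$ and $v=x$; then $\ker\psi=(x)=\Ann_r(v)$, yet $\Ann_r(\psi(v))=\Ann_r(0)=K$ is strictly larger than $\psi(\Ann_r(v))=0$. So for an arbitrary quotient the annihilator of the image can genuinely grow and the check element cannot simply be pushed down; this is exactly the point that the paper's own proof also passes over when it asserts $\pi(\Ann_r(v))=\Ann_r(\pi(v))$ without justification.

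What makes the step correct in this particular situation is a structural fact you never invoke: $\ker\psi$ is generated by a central idempotent. The normal subgroup $Z=\mathbb{F}_{p^n}^*\times\{1_G\}$ of $\hat G$ has order $p^n-1$, coprime to $p$, so $\mathbb{F}_pZ$ is semisimple; the primitive idempotent $e\in\mathbb{F}_pZ$ of the block that $\psi$ maps isomorphically onto $\mathbb{F}_{p^n}$ is fixed under conjugation by $\hat G$ (conjugation acts on $Z$ through powers of the Frobenius, which preserve $\ker(\psi|_{\mathbb{F}_pZ})$), hence $e$ is central in $\mathbb{F}_p\hat G$, and a dimension count gives $\ker\psi=(1-e)\,\mathbb{F}_p\hat G$. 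Now $\ker\psi\subseteq\Ann_r(v)$ means $v(1-e)=0$, so $v=ve$ and $v\,\mathbb{F}_p\hat G\subseteq e\,\mathbb{F}_p\hat G$, whence $v\,\mathbb{F}_p\hat G\cap\ker\psi=0$; therefore $va\in\ker\psi$ really does force $va=0$, and your lemma becomes valid here, closing the gap (equivalently, $\mathbb{F}_{p^n}^\alpha[G,\sigma]\cong e\,\mathbb{F}_p\hat G$ is a two-sided direct summand and annihilators are computed blockwise). An alternative repair avoiding idempotents: code-checkability makes $\mathbb{F}_p\hat G$ a principal ideal ring, quotients of principal ideal rings are principal ideal rings, and since the twisted skew group ring is Frobenius (as the paper cites later), having every left ideal principal forces every right ideal to be checkable. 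As written, however, your descent step has a genuine gap.
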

\begin{proof}
    $(\Longrightarrow)$

    If $G$ is $p$-nilpotent with a cyclic Sylow $p$-subgroup then $\hat{G}$ is  $p$-nilpotent with a cyclic Sylow $p$-subgroup. Then $\mathbb{F}_p\hat{G}$ is code checkable. Then let $I\trianglelefteq_r \mathbb{F}_{p^n}^\alpha[G,\sigma]$ a right ideal, and let $\hat{I} = \pi^{-1}( I)$. Then $\hat{I}\trianglelefteq_r \mathbb{F}_p \hat{G}$ and therefore is code checkable, so $\hat{I} = \operatorname{Ann}_r(v)$ with $v\in \mathbb{F}_p \hat{G}$. Therefore $I=\pi(\hat{I}) = \pi(\operatorname{Ann}_r(v)) = \operatorname{Ann}_r(\pi(v))$
\end{proof}

This, in addition, also gives sufficient conditions for $\mathbb{F}_{p^n}^\alpha[G,\sigma]$ to have only principals left ideals. To do so, recall the well known result about Frobenius algebras.
\begin{lemma}
Let $A$ be a Frobenius algebra and let $L \le A$ be a left ideal.  
Set $I = \Ann_r(L)$. Then
\[
\Ann_\ell(I) = L.
\]
\end{lemma}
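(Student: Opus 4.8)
The plan is to work with the intrinsic characterization of a Frobenius algebra as a finite‑dimensional algebra $A$ over its ground field $k$ equipped with a nondegenerate associative bilinear form $\langle\,\cdot\,,\,\cdot\,\rangle\colon A\times A\to k$, i.e.\ one satisfying $\langle ab,c\rangle=\langle a,bc\rangle$, and to translate the one‑sided annihilators into orthogonal complements for this form. For a subspace $V\le A$ I would write $V^{\perp}=\{a\in A:\langle v,a\rangle=0\ \forall\,v\in V\}$ and ${}^{\perp}V=\{a\in A:\langle a,v\rangle=0\ \forall\,v\in V\}$. Since the algebras we care about are unital and $L$ is a left ideal we have $AL=L$, and associativity plus nondegeneracy in the first slot give
\[
a\in\Ann_{r}(L)\iff xa=0\ \forall x\in L\iff \langle yx,a\rangle=0\ \forall x\in L,\ y\in A\iff \langle z,a\rangle=0\ \forall z\in L,
\]
so $\Ann_{r}(L)=L^{\perp}$. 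A symmetric computation (using $IA=I$ for a right ideal $I$ and nondegeneracy in the second slot) gives $\Ann_{\ell}(I)={}^{\perp}I$.

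With these two dictionary entries in hand, the assertion reduces to the purely linear‑algebraic fact ${}^{\perp}(L^{\perp})=L$. First I would record the codimension identities $\dim V^{\perp}=\dim A-\dim V$ and $\dim{}^{\perp}V=\dim A-\dim V$, which follow from nondegeneracy: the maps $A\xrightarrow{\ \sim\ }A^{*}$ sending $a$ to $\langle\,\cdot\,,a\rangle$, respectively to $\langle a,\,\cdot\,\rangle$, are isomorphisms in finite dimension, and under them $V^{\perp}$ and ${}^{\perp}V$ are the preimages of the annihilator $V^{\circ}\subseteq A^{*}$, which has codimension $\dim V$. Then the trivial inclusion $L\subseteq{}^{\perp}(L^{\perp})$ together with $\dim{}^{\perp}(L^{\perp})=\dim A-\dim L^{\perp}=\dim A-(\dim A-\dim L)=\dim L$ forces equality. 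Chaining everything, $\Ann_{\ell}(I)=\Ann_{\ell}(\Ann_{r}(L))={}^{\perp}(L^{\perp})=L$.

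The only real obstacle is bookkeeping: the Frobenius form need not be symmetric, so $V^{\perp}$ and ${}^{\perp}V$ genuinely differ, and one must never conflate left and right annihilators or orthogonals. This is precisely why the argument is arranged so that $\Ann_{r}$ pairs with $(\,\cdot\,)^{\perp}$ and $\Ann_{\ell}$ with ${}^{\perp}(\,\cdot\,)$; once this is respected, nondegeneracy delivers the codimension formula on both sides and the proof closes cleanly. If one prefers to start instead from the functional description of a Frobenius algebra — a linear form $\lambda\in A^{*}$ whose kernel contains no nonzero one‑sided ideal — the same proof applies verbatim after setting $\langle a,b\rangle:=\lambda(ab)$ and checking associativity and nondegeneracy, which is routine.
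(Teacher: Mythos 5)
Your argument is correct: the identifications $\Ann_r(L)=L^{\perp}$ and $\Ann_\ell(I)={}^{\perp}I$ via the associative nondegenerate form, followed by the codimension count and the trivial inclusion $L\subseteq{}^{\perp}(L^{\perp})$, give a complete proof of the double-annihilator property. The paper itself states this lemma without proof, citing it as a well-known fact about Frobenius algebras, so there is no internal argument to compare against; what you wrote is precisely the standard textbook proof that fills that gap, and it applies in the paper's setting since the twisted (skew) group algebras considered there are finite-dimensional. The only cosmetic point is the phrase ``nondegeneracy in the first slot'': at that step you are detecting $xa=0$ from $\langle y,xa\rangle=0$ for all $y$, i.e.\ using nondegeneracy obtained by varying the first argument; in finite dimension one-sided nondegeneracy yields both, so nothing is affected.
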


Now we can prove the generalization to our case. 

\begin{proposition}

Let $K^\alpha G$ be a code\!-checkable group algebra. Then every left ideal of $K^\alpha G$
is principal.
    
\end{proposition}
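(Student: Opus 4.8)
The plan is to exploit the duality available in a Frobenius algebra, since $K^\alpha G$ is a (symmetric, hence) Frobenius algebra. Let $L \le K^\alpha G$ be an arbitrary left ideal. First I would set $I = \Ann_r(L)$, which is a right ideal, and invoke the code-checkability hypothesis: being code-checkable means every right ideal is the right annihilator of a single element, so there exists $v \in K^\alpha G$ with $I = \Ann_r(v) = \Ann_r\bigl((K^\alpha G)\,v\bigr)$. In particular $I = \Ann_r\bigl((K^\alpha G)\,v\bigr)$ exhibits $I$ as the right annihilator of the \emph{left} ideal $(K^\alpha G)\,v$.

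Next I would apply the Frobenius-algebra lemma recalled just above (for a Frobenius algebra $A$ and a left ideal $L$, one has $\Ann_\ell(\Ann_r(L)) = L$) to recover $L$ from $I$: namely $L = \Ann_\ell(I) = \Ann_\ell(\Ann_r(L))$. Combining this with $I = \Ann_r(v)$ gives
\[
L = \Ann_\ell\bigl(\Ann_r(v)\bigr).
\]
The remaining point is to identify $\Ann_\ell(\Ann_r(v))$ with a principal left ideal. The natural candidate is $(K^\alpha G)\,v$ itself: one inclusion, $(K^\alpha G)\,v \subseteq \Ann_\ell(\Ann_r(v))$, is immediate because every element of $\Ann_r(v)$ kills $v$ on the right and hence kills all of $(K^\alpha G)\,v$ on the right. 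For the reverse inclusion, apply the Frobenius lemma once more, this time to the left ideal $(K^\alpha G)\,v$: it yields $\Ann_\ell\bigl(\Ann_r((K^\alpha G)\,v)\bigr) = (K^\alpha G)\,v$, and since $\Ann_r((K^\alpha G)\,v) = \Ann_r(v)$ (the two annihilators coincide, as $\Ann_r$ of a principal left ideal equals $\Ann_r$ of its generator), we conclude $\Ann_\ell(\Ann_r(v)) = (K^\alpha G)\,v$. Therefore $L = (K^\alpha G)\,v$ is principal.

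The only genuinely delicate point is justifying that $K^\alpha G$ is a Frobenius algebra so that the recalled lemma applies; this is standard (twisted group algebras over a field are symmetric algebras, via the trace form reading off the coefficient of $\overline{e}$), and I would cite it rather than prove it. Everything else is a formal manipulation of annihilators together with two applications of the double-annihilator identity, so I expect no substantive obstacle beyond making sure the left/right bookkeeping is consistent throughout.
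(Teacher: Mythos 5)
Your proof is correct and follows essentially the same route as the paper: write the left ideal as $L=\Ann_\ell(\Ann_r(L))$ via the Frobenius double-annihilator property and apply checkability to the right ideal $\Ann_r(L)$. In fact you make explicit the identification $\Ann_\ell(\Ann_r(v)) = (K^\alpha G)\,v$ (via $\Ann_r((K^\alpha G)v)=\Ann_r(v)$ and a second use of the double-annihilator lemma), a step the paper only asserts as a consequence of the recalled lemma.
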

\begin{proof}
Since $K^\alpha G$ is code\!-checkable, every right ideal is checkable by definition.
By the previous result, in a Frobenius algebra,
a right ideal $I$ is checkable if and only if its left annihilator
$\Ann_\ell(I)$ is a principal left ideal. Hence
\[
\Ann_\ell(I) \ \text{is principal for every right ideal } I \le KG.
\tag{$\ast$}
\]

Now let $L \le KG$ be any left ideal.  Consider the right ideal
\[
I := \Ann_r(L).
\]
By the double annihilator property for Frobenius algebras (Proposition~2.2),
we have
\[
L = \Ann_\ell(\Ann_r(L)) = \Ann_\ell(I).
\]
Thus $L$ is the left annihilator of a right ideal.  By $(\ast)$,
$\Ann_\ell(I)$ is principal, and therefore $L$ is principal.

Hence every left ideal of $K^\alpha G$ is principal as in \cite{Behajaina2024Twisted} it is proven that it is Frobenius.
\end{proof}

\section{Abelian ideals in twisted group algebra}

First of all, recall some basic definitions of coding theory.
\begin{definition}
    Let $C_1, C_2 \subset \mathbb{F}_q^m$ codes are said to be equivalent if there exist a diagonal matrix $D = diag(\lambda_1, \cdots, \lambda_m) \in Mat_m(\mathbb{F}_q)$ and a permutation matrix $P$ such that  $C_1 = \{PDx; x \in C_2 \}$. They are said to be permutation equivalent if $D= Id_m$.
\end{definition}

In \cite{GARCIAPILLADO2019167} the following result regarding when ideals over group algebra are permutation equivalent to abelian group algebras.
\begin{theorem}[Theorem 1, \cite{GARCIAPILLADO2019167}]
    Let $C$ be a $G$-code over a finite field $K$ for a finite group $G$. If
\[
\dim_K(C) \le 3,
\]
then $C$ is permutation equivalent to an abelian group code.
\end{theorem}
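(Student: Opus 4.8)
The plan is to regard the $G$-code $C$ as a left ideal of the group algebra $KG$ and, via the intrinsic description of group codes, to work inside the permutation automorphism group $\mathrm{PAut}(C)\le S_n$ with $n=|G|$: the assertion that $C$ is permutation equivalent to an abelian group code is equivalent to $\mathrm{PAut}(C)$ containing an abelian subgroup acting regularly on the $n$ coordinates. If $G$ is abelian there is nothing to prove, so assume $G$ is non-abelian.

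The first step is a reduction to the case in which $G$ acts faithfully on $C$. Let $N=\{g\in G: gc=c\text{ for all }c\in C\}$; this is the kernel of the representation $G\to\mathrm{GL}_K(C)$, hence normal, and every codeword of $C$ is constant on the right cosets of $N$. Deflating along $G\twoheadrightarrow G/N$ yields a $(G/N)$-code $\bar C$ of the same dimension $d:=\dim_K C$, and $C$ is precisely the pull-back of $\bar C$ along this surjection. Conversely, if $\bar C$ is permutation equivalent to an $H$-code for some abelian $H$, then pulling that code back along $H\times A\twoheadrightarrow H$ for any abelian group $A$ with $|A|=|N|$ produces an abelian $(H\times A)$-code of length $n$ permutation equivalent to $C$ (match the $|N|$ coordinates of $C$ lying over a coset of $N$ with the $|A|$ coordinates lying over the corresponding fibre of $H\times A\to H$). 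So it suffices to treat the case $N=1$, i.e.\ $G\hookrightarrow\mathrm{GL}_d(K')$ for a finite extension $K'/K$ with $d\le 3$.

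If $d=1$ this is immediate, since a finite subgroup of $\mathrm{GL}_1(K')=(K')^{*}$ is cyclic, so $G$ is abelian. Hence we may assume $d\in\{2,3\}$ and $G$ is a non-abelian finite subgroup of $\mathrm{GL}_2(K')$ or $\mathrm{GL}_3(K')$. Choosing a basis of $C$ identifies the $n$ columns of a generator matrix with the $G$-orbit (under the contragredient representation) of a single cyclic vector $v\in(K')^d$, each value occurring with multiplicity $|\mathrm{Stab}_G(v)|$; then $\mathrm{PAut}(C)$ is assembled from the group $\Gamma\le\mathrm{GL}_d(K')$ of $K'$-linear symmetries of this column configuration together with the permutations internal to the repeated columns. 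It remains to produce inside this group an abelian regular subgroup, and one splits according to the structure of $\rho(G)$: (i) if $\rho(G)$ is a transitive linear group, so that the orbit is all of $(K'')^{d'}\setminus\{0\}$ for the subfield $K''$ and dimension $d'$ it actually generates, then $\Gamma$ contains $\mathrm{GL}_{d'}(K'')$ and hence a Singer cycle acting regularly on the configuration, which together with a cyclic shift through any repeated columns suffices; (ii) if $\rho(G)$ is imprimitive/monomial, the orbit is a union of ``$m$-gons'' whose cyclic symmetry, combined again with such a shift, gives the desired abelian regular subgroup; (iii) the remaining cases are the primitive non-abelian subgroups in dimensions $2$ and $3$ --- dihedral and binary polyhedral groups and $\mathrm{SL}_2$ of small fields in dimension $2$, and the groups related to $A_4,S_4,A_5$ and the extraspecial or Hessian groups in dimension $3$ --- which by the Dickson and Blichfeldt-type classifications form a short explicit list and are treated one family at a time by writing down the configuration and exhibiting an abelian group of symmetries acting regularly on the coordinates.

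The main obstacle I expect is case (iii), and within it the primitive groups in dimension $3$: the representation of $G$ on $C$ does not factor through $G^{\mathrm{ab}}$, so character arguments are unavailable, and since the orbit is a rigid proper subset of the ambient space its linear symmetry group can be close to $G$ itself. Showing that such a code nevertheless has an abelian regular permutation automorphism group --- exploiting the internal freedom in the repeated columns when $\mathrm{Stab}_G(v)\neq 1$, and a direct construction otherwise --- and verifying that the list of cases is genuinely exhaustive, is where essentially all of the real work lies.
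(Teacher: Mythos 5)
Your proposal is not a proof but a programme, and the part you yourself identify as ``where essentially all of the real work lies'' is exactly the part that is missing. Concretely: after the (reasonable) reduction to a faithful action and the reformulation via the intrinsic criterion that $C$ is permutation equivalent to an abelian group code iff $\mathrm{PAut}(C)$ contains an abelian regular subgroup, everything rests on the trichotomy (i) transitive linear image, (ii) imprimitive/monomial image, (iii) a ``short explicit list'' of primitive groups. This trichotomy is not established: what matters for your column configuration is the orbit structure of the chosen vector $v$, not just primitivity of $\rho(G)$, and the orbit need not be all of $(K'')^{d'}\setminus\{0\}$ in the irreducible primitive case, nor a clean union of ``$m$-gons'' in the imprimitive case; even in cases (i) and (ii) the construction of an \emph{abelian} group that is simultaneously regular on all $n$ coordinates (gluing a symmetry of the set of distinct columns with shifts inside repeated columns) is asserted, not carried out. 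Case (iii) --- the Blichfeldt-type primitive subgroups of $\mathrm{GL}_3$, exactly where the representation does not factor through $G^{\mathrm{ab}}$ --- is left entirely open, together with the verification that the case list is exhaustive over every finite field and characteristic (including the modular case $\operatorname{char}K\mid |G|$, where the classification and the semisimplicity you implicitly use behave differently). So there is a genuine gap, not a finished argument.

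For comparison, the proof this statement actually has (in the cited source, and mirrored in this paper's twisted generalization) avoids any classification of linear groups. The two ingredients are: a transfer lemma (Lemma~1.3 of the reference, generalized here as Lemma \ref{LemmaScalar}) saying that if a normal subgroup $N\triangleleft G$ acts trivially (resp.\ by scalars) on the ideal $I$ and $H$ is any group of the same order with a normal $F$ and $G/N\cong H/F$, then $I$ is (permutation) equivalent to an ideal of $KH$; and a short module-theoretic analysis in dimensions $1,2,3$: one-dimensional ideals force the relevant twisting to trivialize, simple ideals have a finite (hence commutative, by Wedderburn) endomorphism ring, which forces the derived subgroup $G'$ to act by scalars, and the non-simple three-dimensional case reduces to the lower-dimensional ones via a fully characteristic submodule argument. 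Applying the transfer lemma with $N=F=G'$ and $H=G'\times G/G'$ then lands the code in an abelian (or at least reducible-to-abelian) group algebra in a few lines. If you want to salvage your route, you would have to either complete the case-by-case analysis of primitive subgroups of $\mathrm{GL}_2$ and $\mathrm{GL}_3$ over all finite fields, or import precisely the scalar-action observation above --- at which point the classification machinery becomes unnecessary.
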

We will generalize the result to the case of twisted group codes. To begin with, we need some previous results, for which in the same paper appears
\begin{lemma}[Lemma 1.3, \cite{GARCIAPILLADO2019167}]
    Let $K$ be a field and let $G$ and $H$ be two groups of the same order
$n < \infty$. Suppose that there exist two normal subgroups $N \triangleleft G$
and $F \triangleleft H$ such that $G/N \cong H/F$. If $N$ acts trivially on some
(left, right, or two-sided) ideal $I \subseteq KG$, then $I$ is permutation
equivalent to some (left, right, or two-sided) ideal of the ring $KH$.
\end{lemma}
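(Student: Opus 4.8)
The plan is to realise the permutation equivalence explicitly, as a relabelling of the coordinate set: after the usual identifications $KG\cong K^{n}\cong KH$, I would construct a bijection $\Phi\colon G\to H$ whose induced coordinate permutation carries $I$ onto an ideal of $KH$. I would treat the left-ideal case in detail; the right and two-sided cases are identical after interchanging left and right cosets (resp.\ combining the two). Throughout, put $\widehat N=\sum_{g\in N}\bar g\in KG$ and $\widehat F=\sum_{g\in F}\bar g\in KH$; since $N\triangleleft G$ and $F\triangleleft H$, both are central, so $\widehat N\cdot KG$ and $\widehat F\cdot KH$ are two-sided ideals. \emph{Step 1 (locating $I$):} First I would check that ``$N$ acts trivially on $I$'', i.e.\ $\bar n x=x$ for all $n\in N$, $x\in I$, forces $I\subseteq\widehat N\cdot KG$: fixing a right transversal $T=\{t_1,\dots,t_r\}$ of $N$ in $G$ ($r=[G:N]$) and writing $x=\sum_i x_i\bar t_i$ with $x_i\in KN$, the hypothesis gives $\bar n x_i=x_i$ for all $n,i$, and a one-line computation in $KN$ shows that an element of $KN$ fixed by all left translations is a scalar multiple of $\widehat N$, whence $x\in\operatorname{span}_K\{\widehat N\bar t_i\}$. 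These $r$ vectors, the indicator vectors of the right cosets $Nt_i$, are linearly independent and form a $K$-basis of $\widehat N\cdot KG$; note this step is characteristic-free.

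\emph{Step 2 (the coordinate bijection):} Next, putting $m=|N|=|F|$ and $r=n/m=|G/N|=|H/F|$, I would choose a right transversal $S=\{s_1,\dots,s_r\}$ of $F$ in $H$ so that the given isomorphism $\psi\colon G/N\to H/F$ satisfies $\psi(Nt_i)=Fs_i$ for all $i$, fix any bijection of sets $\beta\colon N\to F$, and define $\Phi\colon G\to H$ by $\Phi(nt_i)=\beta(n)s_i$ for $n\in N$, $1\le i\le r$. Then $\Phi$ is a bijection carrying each block $Nt_i$ onto $Fs_i$, so the induced linear map $K^{G}\to K^{H}$ sends $\widehat N\bar t_i$ to $\widehat F\bar s_i$ and hence restricts to an isomorphism $\widehat N\cdot KG\to\widehat F\cdot KH$ matching the two distinguished bases. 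In particular $\Phi(I)$ is permutation equivalent to $I$ by construction, so it will remain only to show that $\Phi(I)$ is a left ideal of $KH$.

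\emph{Step 3 (transporting the ideal structure):} For this, since $\widehat N$ is central one computes $\bar g\,\widehat N\bar t_i=\widehat N\,\overline{gt_i}=\widehat N\bar t_{\pi_g(i)}$, where $\pi_g$ is the permutation of $\{1,\dots,r\}$ recording the left action $g\cdot(Nt)=N(gt)$ of $G$ on the cosets $N\backslash G$; because $N$ acts trivially there, $g\mapsto\pi_g$ factors through $G/N$ and is precisely the regular action of the group $G/N$. Identifying $\widehat N\cdot KG$ with $K^{r}$ via $\{\widehat N\bar t_i\}$, the left ideals of $KG$ contained in $\widehat N\cdot KG$ correspond exactly to the $\{\pi_g\}$-invariant subspaces $V\subseteq K^{r}$, and by Step~1 our $I$ is one of these. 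The same discussion for $H$ produces permutations $\pi'_h$ ($h\in H$) realising the regular action of $H/F$, and since $\Phi$ matches $Nt_i$ with $Fs_i$ it sends $I$ to the subspace of $\widehat F\cdot KH$ with the same coefficient subspace $V$. Finally, because $\psi$ is a group isomorphism with $\psi(Nt_i)=Fs_i$, one has $\pi_g=\pi'_h$ whenever $\psi(gN)=hF$, so $\{\pi_g:g\in G\}=\{\pi'_h:h\in H\}$ as subgroups of the symmetric group on $\{1,\dots,r\}$. Hence $V$ is $G$-invariant if and only if it is $H$-invariant, i.e.\ $\Phi(I)$ is a left ideal of $KH$, completing the argument.

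I expect the main obstacle to be not any single computation but the bookkeeping through Steps~1--3 — keeping the side of the cosets and the direction of the action consistent — and, above all, checking that the two index-permutation groups \emph{literally coincide} after transport by $\psi$, not merely that they are abstractly isomorphic. That last point is exactly where the hypothesis $G/N\cong H/F$ (rather than the weaker $|G|=|H|$) really enters, and it is what forces the $\psi$-compatible choice of $S$ in Step~2.
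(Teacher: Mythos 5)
Your proposal is correct and follows essentially the same route as the paper's own argument (given there for the twisted generalization, Lemma \ref{LemmaScalar}): match transversals of $N$ in $G$ and $F$ in $H$ through the isomorphism $G/N\cong H/F$, pick an arbitrary bijection $N\to F$, note that triviality of the $N$-action forces $I$ into the span of the coset sums $\widehat N\,\overline{t_i}$, and transport $I$ by the resulting coordinate bijection $G\to H$. The only difference is cosmetic: you verify that the image is an ideal by identifying such ideals with subspaces of $K^{r}$ invariant under the regular action of $G/N\cong H/F$, whereas the paper checks closure under multiplication by $\overline h$ directly.
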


Our generalization needs the following definition

\begin{definition}
    Let $K^\alpha G$ be a twisted group algebra with $G$ finite. We will say that a subgroup $N$ of $G$ acts by scalar on a subset $S\subset K^\alpha G$ if there exists a $\alpha-$ homomorphism $\lambda: N \longrightarrow K$ such that
    \begin{equation}
        \overline{u} x = \lambda(u) x \quad \forall x\in S, u\in N
    \end{equation}
\end{definition}

Now we can state that
\begin{lemma} \label{LemmaScalar}
    Let $K^\alpha G$ be a twisted group algebra with $G$ finite. Let $H$ be a group of the same order as $G$. Suppose that there exist two normal subgroups $N \triangleleft G$
and $F \triangleleft H$ such that $G/N \cong H/F$. If $N$ acts by scalar on some
(left, right, or two-sided) ideal $I \subseteq K^\alpha G$, then $I$ is
equivalent to some (left, right, or two-sided) ideal of the ring $KH$.
\end{lemma}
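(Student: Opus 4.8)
The plan is to adapt the proof of Lemma~1.3 of \cite{GARCIAPILLADO2019167}, letting the diagonal matrix allowed in the notion of equivalence absorb the cocycle. I would proceed in three stages.

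\emph{Stage 1 (reduction to the untwisted, coset-constant situation).} Since $\lambda\colon N\to K^{*}$ is an $\alpha$-homomorphism, $\alpha(u,v)=\lambda(u)\lambda(v)\lambda(uv)^{-1}$ for $u,v\in N$, so $\alpha|_{N\times N}$ is a coboundary. Fix a transversal $t_{1},\dots,t_{m}$ of $N$ in $G$ ($m=[G:N]$), set $c(u t_{j})=\lambda(u)$, and replace $\alpha$ by the cohomologous cocycle $\alpha'(x,y)=\alpha(x,y)\,c(xy)\,c(x)^{-1}c(y)^{-1}$. The assignment $\overline{g}\mapsto c(g)\,\overline{g}$ is a $K$-algebra isomorphism $K^{\alpha}G\to K^{\alpha'}G$ which on coordinate vectors is multiplication by the diagonal matrix with entries $c(g)$, so it carries $I$ to an equivalent code $I'\le K^{\alpha'}G$; one checks that now $\overline{u}$ acts as the identity on $I'$ for all $u\in N$ and $\alpha'|_{N\times N}=1$. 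Hence I may assume $N$ acts trivially on $I$ and $\alpha|_{N\times N}=1$. Writing $x=\sum_{g}a_{g}\overline{g}\in I$, the relation $\overline{u}x=x$ forces $a_{ut_{j}}=\alpha(u,t_{j})\,a_{t_{j}}$; multiplying the coordinate $ut_{j}$ by $\alpha(u,t_{j})^{-1}$ --- again diagonal --- we obtain an equivalent code $\widetilde{I}$ whose codewords are constant on each coset $Nt_{j}$. Therefore $\widetilde{I}=\iota(V)$, where $V=\{(a_{t_{j}})_{j}:x\in I\}\subseteq K^{m}$ has dimension $\dim_{K}I$ and $\iota\colon K[G/N]\to KG$ is the inflation $\overline{Nt_{j}}\mapsto\widehat{N}t_{j}=\sum_{u\in N}ut_{j}$ onto the left ideal $\widehat{N}KG$ of $KG$.

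\emph{Stage 2 (identifying the reduced code).} Because $N$ acts trivially on $I$, the action of $K^{\alpha}G$ on $I$ kills every $\overline{u}-1$ with $u\in N$ and so descends to an $m$-dimensional quotient $\overline{A}$ of $K^{\alpha}G$ which, on the basis given by the classes of the $\overline{t_{j}}$, is a twisted group algebra $\overline{A}\cong K^{\beta}[G/N]$ for a $2$-cocycle $\beta$ of $G/N$ assembled from $\alpha$ and from how the products $t_{i}t_{j}$ fall into their cosets. Transporting the module structure of $I$ through the two diagonal changes of Stage~1, $V$ becomes a left (resp.\ right, two-sided) ideal of $\overline{A}$. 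The crux is that $\beta$ is cohomologically trivial; granting this, the algebra isomorphism $K^{\beta}[G/N]\cong K[G/N]$ is a diagonal change of the basis $\{\overline{Nt_{j}}\}$, so one further block-constant diagonal adjustment turns $V$ into an honest left ideal $V'$ of the untwisted group algebra $K[G/N]$ and $\widetilde{I}$ into an equivalent left ideal $J:=\iota(V')$ of $KG$, on which $N$ acts trivially. I expect this triviality of $\beta$ to be the main obstacle: it is exactly the point where one must use that $N$ acts by a genuine \emph{scalar} $\lambda$ rather than merely stabilising $I$ up to a projective factor. For $G/N$ cyclic it is automatic since $H^{2}(G/N,K^{*})=0$, which already suffices for the intended application to ideals of dimension at most three; in the general case I would extend $\lambda$ to each subgroup $\langle N,g\rangle$ and glue the resulting local trivialisations.

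\emph{Stage 3 (transport to $KH$).} Now $J\le KG$ is a left ideal on which $N$ acts trivially and which is equivalent to $I$, so the original Lemma~1.3 of \cite{GARCIAPILLADO2019167} --- whose hypotheses $N\triangleleft G$, $F\triangleleft H$, $G/N\cong H/F$ are precisely ours --- shows that $J$ is permutation equivalent to a left ideal of $KH$. Composing this equivalence with those of Stages~1--2 shows $I$ is equivalent to an ideal of $KH$. The right and two-sided cases follow by the symmetric argument with right cosets and right multiplication.
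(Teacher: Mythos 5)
Your Stage~1 and Stage~3 are essentially sound bookkeeping, but Stage~2 contains a genuine gap, and you have located it yourself: the whole argument hinges on the induced $2$-cocycle $\beta$ on $G/N$ being cohomologically trivial, and this is neither proved nor true by the reasons you offer. The fallback claim that $H^{2}(G/N,K^{*})=0$ when $G/N$ is cyclic is false over the finite fields the paper works with: $H^{2}(C_m,K^{*})\cong K^{*}/(K^{*})^{m}$, so for instance $H^{2}(C_2,\mathbb{F}_3^{*})\neq 0$, and indeed $\mathbb{F}_3^{\alpha}C_2\cong\mathbb{F}_3[x]/(x^{2}+1)\cong\mathbb{F}_9$ is a genuinely twisted group algebra not isomorphic to $\mathbb{F}_3C_2$. (The vanishing you have in mind is the Schur multiplier statement over an algebraically closed field.) Thus even the cyclic case, which you say suffices for the dimension $\le 3$ application, does not go through as written, and the proposal for the general case (``extend $\lambda$ to each subgroup $\langle N,g\rangle$ and glue'') is a plan rather than an argument. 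The hypothesis that $N$ acts by the scalar $\lambda$ gives you exactly that $\alpha|_{N\times N}$ is a coboundary; it does not by itself trivialize anything on the quotient $G/N$, so as structured your reduction to an honest ideal of $K[G/N]$ is missing its key ingredient.

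The paper's proof takes a different, more direct route that avoids ever forming the quotient algebra $K^{\beta}[G/N]$: it fixes representatives $g_1,\dots,g_s$ of $G/N$, matched representatives $h_1,\dots,h_s$ of $H/F$, a bijection $\tau\colon N\to F$, and defines an explicit monomial map $\varphi$ sending $\overline{ug_i}$ to the scaled basis element $\frac{\alpha(u,g_i u^{-1}(ug_i)^{-1}\cdot)}{\alpha(u,g_i)\lambda(u)}$-type multiple of $\overline{h_i\tau(u)}$; the scalar-action relation $a_{ug_i}=\lambda(u)^{-1}\alpha(u,g_i)a_{g_i}$ then collapses each codeword to $\sum_i a_{g_i}\overline{h_i}\,\widehat{F}$, and the ideal property of the image in $KH$ is checked by direct computation against multiplication by the $\overline{h_j}$. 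In other words, all the cocycle and $\lambda$ data are absorbed elementwise into the diagonal part of a single monomial transformation into $KH$, rather than being trivialized cohomologically on $G/N$ and then fed into Lemma~1.3 of \cite{GARCIAPILLADO2019167}. You have correctly identified where the real difficulty sits (your $\beta$-triviality is exactly the closure step under the action of coset representatives), but your proposal does not resolve it, so as it stands the proof is incomplete.
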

\begin{proof}
Let $s = |G/N|$ and denote by $g_1, \ldots, g_s$ a complete set of representatives
of $G$ modulo $N$. Thus
\[
G/N = \{ g_1N, \ldots, g_sN \}
\quad \text{and} \quad
G = \bigcup_{i=1}^s g_i N .
\]

Fixing the isomorphism $f \colon G/N \to H/F$, we can choose a representative
system $\{h_i\}_{i=1}^s$ of the group $H$ modulo $F$ such that
\[
f(g_iN) = h_iK, \quad i = 1, \ldots, s.
\]

For each pair of indices $i,j \in \{1, \ldots, s\}$, let $k(i,j)$ be defined by the
equality
\[
g_i N g_j N = g_{k(i,j)} N.
\]
Then we have
\[
g_i g_j = g_{k(i,j)} u_{ij} \quad \text{for some } u_{ij} \in N,
\]
and similarly
\[
h_i h_j = h_{k(i,j)} v_{ij} \quad \text{for some } v_{ij} \in F.
\]

Since $|N| = |F| = n/s$, we can fix a one-to-one mapping
\[
\tau \colon N \to F.
\]
Define $\varphi \colon G \to H$ as follows: for an arbitrary $x \in G$, the element
$x$ belongs to exactly one coset $Ng_i$ of $G$ modulo $N$. Set
\[
\varphi(x) = \frac{\alpha(xg_i^{-1}, g_i x^{-1})}{\alpha(xg_i^{-1}, g_i)\lambda(xg_i^{-1})}\overline{ h_i \, \tau( xg_i^{-1})}
\]
Clearly, $\varphi$ is a well-defined one-to-one map.

Let $x=\sum_{g\in G} a_g \overline{g} \in I$ and let $\lambda$ be associated $\alpha-$homomorphism of $N$ over $I$. Then, for all $u\in N$ we have that
     \begin{equation}
         \lambda(u)\sum_{g\in G} a_g \overline{g} =  \lambda(u)x = \overline{u}x = \sum_{g\in G} a_g \alpha(u,g) \overline{ug}
     \end{equation}
     So $a_g=\frac{1}{ \lambda(u)}a_{u^{-1}g }\alpha(u,u^{-1}g)$, or equivalently $a_g=\frac{1}{ \lambda(u^{-1})}a_{ug }\alpha(u^{-1},ug)$. In addition, we have that 
     \begin{equation}
         \alpha(u^{-1},ug) = \frac{\alpha(u^{-1},u)}{\alpha(u,g)}
     \end{equation}
     and therefore we have the equality
     \begin{equation}
         a_{ug} =  \lambda(u)\frac{\alpha(u,g)}{\alpha(u^{-1},u)} a_g
     \end{equation}

Then $x=\sum_{i=1}^s a_{g_i} \sum_{u\in N}  \frac{ \lambda(u)\alpha(u,g)}{\alpha(u^{-1},u)} \overline{ug_i}$. Now, note that for all $i$ we have that
\begin{align*}
    \varphi\left(\sum_{u\in N}  \frac{ \lambda(u)\alpha(u,g)}{\alpha(u^{-1},u)} \overline{ug_i}\right) & = \sum_{u\in N}  \frac{ \lambda(u)\alpha(u,g)}{\alpha(u^{-1},u)} \varphi(\overline{ug_i}) \\ & = \sum_{u\in N}  \frac{ \lambda(u)\alpha(u,g)}{\alpha(u^{-1},u)} \frac{\alpha(u^{-1},u) }{\alpha(u,g) \lambda(u)} \overline{ h_i \tau(u)} \\ & = h_i \sum_{u \in N} \tau(u) \\ & = h_k F_{\sum}
\end{align*}
Also, for all $h\in H$, we have that $h=h_j v$ for a unique $h_j$ and $v\in F$. We have that
\begin{align*}
    \overline{h}\varphi(x) & =  \overline{h_jv} \sum_{i=1}^s a_{g_i} \overline{h_i} F_{\sum}  \\ & =  \sum_{i=1}^s a_{g_i} \overline{h_jv}\overline{h_i} F_{\sum}  \\ & =   \sum_{i=1}^s a_{g_i} \overline{h_jv}\overline{h_i} F_{\sum}
    \\ & = \sum_{i=1}^s a_{g_i} \overline{h_jh_jv'}F_{\sum}
     \\ & = \sum_{i=1}^s a_{g_i} \overline{h_{k(j,i)}v''v'}F_{\sum}
     \\ & = \sum_{i=1}^s a_{g_i} \overline{h_{k(j,i)}}F_{\sum} 
     \\ & = \sum_{i=1}^s a_{g_i} F_{\sum} \overline{h_{k(j,i)}}
     \\ & = \varphi\left(\sum_{i=1}^s a_{g_i} \sum_{u\in N}  \frac{\alpha(u,g_{k(j,i)})}{\alpha(u^{-1},u) \lambda(u)}\overline{ug_{k(j,i)}}\right) \in \varphi(I)
\end{align*}
a similar calculation lead to $\varphi(x)\overline{h} \in \varphi(I)$ and we have the result

\end{proof}

Now, we will start the proof of the following theorem. r

\begin{theorem}
    Let $K^\alpha G$ be a twisted group algebra whit $G$ finite. Let $C$ be a twisted $G$-code over a finite field $K$ for a finite group $G$. If
\[
\dim_K(C) \le 3,
\]
then $C$ is an abelian group code.
\end{theorem}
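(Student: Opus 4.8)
The plan is to follow the proof of the quoted theorem of Garc\'ia-Pillado et al., making the two substitutions forced by the twist. Wherever that argument uses a normal subgroup \emph{acting trivially} on the code we use a normal subgroup \emph{acting by scalar} in the sense of the definition preceding Lemma~\ref{LemmaScalar}, and in place of their Lemma~1.3 we invoke Lemma~\ref{LemmaScalar}, which is exactly the device that turns a scalar action of $N$ on $C$, together with an isomorphism $G/N\cong H/F$, into an equivalence of $C$ with an ideal of the \emph{untwisted} algebra $KH$. Since $K^\alpha G$ is Frobenius and Lemma~\ref{LemmaScalar} covers left, right and two-sided ideals alike, it is enough to treat a left ideal $C$ with $\dim_K C=d\le 3$.

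First I would set up the subgroup playing the role of the ``kernel of the action''. Let $\rho\colon K^\alpha G\to\operatorname{End}_K(C)$ be left multiplication and put $N=\{\,g\in G:\rho(\overline g)\in K^\times\!\cdot\mathrm{id}_C\,\}$, writing $\rho(\overline g)=\lambda(g)\,\mathrm{id}_C$ for $g\in N$. Using that $\overline g$ is a unit with $\overline g\,\overline{g^{-1}}=\alpha(g,g^{-1})\cdot 1$, one checks exactly as in Proposition~\ref{hatGisgroup} that $N$ is a subgroup of $G$ (with $\overline{g^{-1}}$ acting by the scalar $\alpha(g,g^{-1})\lambda(g)^{-1}$), that $N\trianglelefteq G$ (a conjugate of a scalar operator is scalar, and conjugation sends $\overline g$ to a unit times $\overline{xgx^{-1}}$), and that $\lambda\colon N\to K^\times$ is an $\alpha$-homomorphism, since $\lambda(g)\lambda(h)\,\mathrm{id}=\rho(\overline g\,\overline h)=\alpha(g,h)\lambda(gh)\,\mathrm{id}$. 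Thus $N$ acts by scalar on $C$. Moreover $g\mapsto[\rho(\overline g)]$ is a homomorphism $G\to\operatorname{PGL}_d(K)$ (the cocycle disappears modulo scalars) with kernel precisely $N$, so $G/N$ embeds in $\operatorname{PGL}_d(K)$ with $d\le 3$.

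Now the case analysis. If $d=1$, then $\rho$ is a twisted character, $N=G$, $G/N$ is trivial, and Lemma~\ref{LemmaScalar} with $H$ any abelian group of order $|G|$ and $F=H$ shows that $C$ is equivalent to an ideal of $KH$, hence an abelian group code. If $d\in\{2,3\}$, I would run, now for the projective representation $G/N\hookrightarrow\operatorname{PGL}_d(K)$, the structural discussion of Garc\'ia-Pillado et al.\ to conclude that $G/N$ is either abelian or one of the finitely many small non-abelian groups that occur there. In the abelian case, take $H=(G/N)\times A$ with $A$ abelian of order $|N|$ and $F=\{1\}\times A$; then $H$ is abelian, $|H|=|G|$ and $H/F\cong G/N$, so Lemma~\ref{LemmaScalar} yields an equivalence of $C$ with an ideal of the abelian group algebra $KH$. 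In each non-abelian case one exhibits, just as in the untwisted proof, the explicit permutation of the coordinate set $G$ realizing $C$ as an ideal of a suitable abelian group algebra, the only new feature being that the coefficients of the generating words now carry values of $\alpha$, which cancel exactly as in the computation inside the proof of Lemma~\ref{LemmaScalar}.

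I expect the main obstacle to be this last, case-by-case step. Two points need care. First, passing from linear to projective actions means ``faithful'' becomes ``projectively faithful'', so one has to verify that the twist does not enlarge the list of groups $G/N$ that must be examined; this should follow from the smallness of the relevant second cohomology of the few groups arising when $d\le 3$, but it must be checked rather than assumed. Second, Lemma~\ref{LemmaScalar} only produces an \emph{equivalence} (it allows a diagonal matrix), whereas the untwisted theorem gives a \emph{permutation} equivalence; to match the phrasing of the theorem one must check, in the non-abelian cases, that the rescaling imposed by $\alpha$ can be chosen trivial, so that $C$ really is an abelian group code. It is precisely here that $\dim_K C\le 3$ is used: for larger $d$ the small non-abelian groups no longer admit the rigid coordinate realizations on which the explicit construction depends.
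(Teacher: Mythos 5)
Your plan defers exactly the part that carries all the difficulty, and that part is not actually supplied. After constructing the projective kernel $N$ and the embedding $G/N\hookrightarrow\operatorname{PGL}_d(K)$, you propose to ``run the structural discussion of Garc\'ia-Pillado et al.\ for the projective representation'' and to treat ``the finitely many small non-abelian groups that occur there'' case by case. But the untwisted argument is a statement about linear actions coming from ideals of $KG$; you give no proof that its case analysis survives the passage to projective representations, and indeed the two points you yourself flag (that the twist does not enlarge the list of possible $G/N$, and that the diagonal rescalings forced by $\alpha$ can be removed in each exceptional case) are precisely the content that would have to be proved. Moreover, the premise that the non-abelian subgroups of $\operatorname{PGL}_2(K)$ or $\operatorname{PGL}_3(K)$ over a finite field form a short list independent of $K$ is not true without further input (dihedral families, $\operatorname{PSL}_2(q)$, etc.\ occur); what constrains the situation is that $C$ is an ideal of the (twisted) group algebra of dimension at most $3$, and your sketch never exploits this beyond defining $N$. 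As written, the proposal is a programme for re-proving the untwisted theorem in a projective setting rather than a proof.

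The paper avoids this entirely, and the contrast shows the gap is unnecessary as well as unfilled. Its reductions are: a one-dimensional ideal forces $\alpha(g,h)=\lambda(g)\lambda(h)\lambda(gh)^{-1}$, i.e.\ $\alpha$ is a $2$-coboundary, so $K^\alpha G\simeq KG$ and the untwisted theorem of Garc\'ia-Pillado et al.\ applies verbatim; in dimension $3$ with a two-dimensional subideal, the coefficient function $\beta_3$ satisfies $\alpha(g,h)\beta_3(gh)=\beta_3(g)\beta_3(h)$, which either again exhibits $\alpha$ as a coboundary or gives a contradiction; and only in the simple case does one use a scalar action, namely of the commutator subgroup $G'$, transferring via Lemma~\ref{LemmaScalar} with $N=F=G'$ and $H=G'\times G/G'$. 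In particular no classification of subgroups of $\operatorname{PGL}_d$ is ever invoked: the untwisted theorem is used as a black box, and the only genuinely new twisted ingredients are the coboundary observations and the scalar-action transfer lemma. If you want to salvage your route, you would either have to carry out the projective case analysis in full (including the cohomological and rescaling checks you postpone), or, more economically, replace it by the coboundary reductions above.
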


We first start with dimension $1$.
\begin{theorem}
    Let $K^\alpha G$ be a twisted group algebra whit $G$ finite. If there exists a $1$ dimensional ideal then $K^\alpha G \simeq K G$. In particular, all $1$ dimensional ideal are permutation equivalent to an abelian group code.
\end{theorem}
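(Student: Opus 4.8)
The plan is to show that the existence of a one-dimensional ideal forces the $2$-cocycle $\alpha$ to be a coboundary, which immediately gives $K^\alpha G\simeq KG$; the ``in particular'' part then reduces to what is already known in the untwisted setting.

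First I would write the one-dimensional ideal as $I=Kx$ with $0\neq x=\sum_{g\in G}a_g\,\overline{g}$, and assume for definiteness that $I$ is a left ideal (the right-handed and two-sided cases are identical). Since every $\overline{u}$ is a unit of $K^\alpha G$, the element $\overline{u}x$ is a nonzero member of the one-dimensional space $I$, so $\overline{u}x=\lambda(u)x$ for a unique $\lambda(u)\in K^{*}$; thus $G$ itself acts by scalar on $I$ in the sense of the definition preceding Lemma~\ref{LemmaScalar}, with $\lambda$ satisfying $\lambda(u)\lambda(v)=\alpha(u,v)\lambda(uv)$. Expanding $\overline{u}x=\sum_{g}a_g\alpha(u,g)\overline{ug}$ and comparing the coefficient of $\overline{ug}$ in $\overline{u}x=\lambda(u)x$ gives the recursion
\[
a_{ug}=\lambda(u)^{-1}\alpha(u,g)\,a_g\qquad(u,g\in G).
\]
From this I would read off, in order: (i) since $\lambda(u)^{-1}\alpha(u,g)\neq0$, a single nonzero coefficient forces $a_g\neq0$ for every $g$, i.e.\ $x$ has full support; (ii) putting $g=e$ and using $\alpha(u,e)=1$ gives $\lambda(u)=a_e/a_u$; (iii) substituting this back yields the key identity $\alpha(u,g)=a_e\,a_{ug}\,(a_u a_g)^{-1}$.

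Now set $\mu(g)=a_e/a_g\in K^{*}$, so that $\mu(e)=1$ and step (iii) reads $\alpha(u,g)=\mu(u)\mu(g)\mu(ug)^{-1}$, i.e.\ $\alpha$ is a coboundary. Consequently the $K$-linear bijection $\Phi\colon K^\alpha G\to KG$ determined on the basis by $\Phi(\overline{g})=\mu(g)\,g$ is multiplicative: on basis elements $\Phi(\overline{u})\Phi(\overline{g})=\mu(u)\mu(g)\,ug$, while $\Phi(\overline{u}\,\overline{g})=\alpha(u,g)\mu(ug)\,ug=\mu(u)\mu(g)\,ug$. Hence $\Phi$ is an isomorphism of $K$-algebras and $K^\alpha G\simeq KG$, as claimed. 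The main point needing care here is the cocycle bookkeeping in (iii) together with recognizing the resulting expression as a coboundary; everything else is routine, and this is really the only obstacle.

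For the last assertion, since $G$ acts by scalar on $I$ via $\lambda$, one can apply Lemma~\ref{LemmaScalar} with $N=G$ — so that $G/N$ is trivial — taking $H$ to be a cyclic group of order $|G|$ and $F=H$; this shows that $C=I$ is equivalent to an ideal of the cyclic group algebra $K\mathbb{Z}/|G|\mathbb{Z}$, that is, to an abelian group code. Alternatively, one may first transport $I$ through the isomorphism $\Phi$: the computation $\Phi(x)=a_e\sum_{g\in G}g$ shows that $\Phi(I)=K\sum_{g\in G}g$ carries the trivial $G$-action, so Lemma~1.3 of \cite{GARCIAPILLADO2019167} applies and gives permutation equivalence of $\Phi(I)$ with a cyclic code (recovering Theorem~1 of \cite{GARCIAPILLADO2019167} in dimension one), whence the same conclusion for $C$ since $\Phi$ is a monomial map.
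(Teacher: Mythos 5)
Your proof is correct and follows essentially the same route as the paper: the scalar action of $G$ on the one-dimensional ideal $I=Kx$ gives $\alpha(g,h)=\lambda(g)\lambda(h)\lambda(gh)^{-1}$ (your $\mu$ is exactly the paper's $\lambda$, since $\lambda(u)=a_e/a_u$), so $\alpha$ is a $2$-coboundary and $K^\alpha G\simeq KG$. You merely add detail the paper leaves implicit --- the coefficient recursion, the explicit diagonal isomorphism $\Phi(\overline{g})=\mu(g)g$, and the reduction of the ``in particular'' clause to the untwisted one-dimensional case via Lemma~\ref{LemmaScalar} or Lemma~1.3 of \cite{GARCIAPILLADO2019167}.
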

\begin{proof}
    If there exist $I\triangleleft K^\alpha G $, then $I= Kv$ with $v\in K^\alpha G$. Then, for all $g\in G$, we have that $\overline{g} v = \lambda(g) v$ with $\lambda(g) \in K^*$. Therefore, we have
    \begin{equation}
        \lambda(gh) \alpha(g,h)v  =  \alpha(g,h) \overline{ gh} v = (\overline{g} \cdot \overline{h} ) =\overline{g} ( \overline{h} v) = \lambda(g) \lambda(h) v
    \end{equation}
    And therefore 
    \begin{equation}
        \alpha(g,h) = \lambda(g) \lambda(h) \lambda(gh)^{-1}
    \end{equation}
    so $\alpha $ is a $2-$coboundary and therefore $K^\alpha G \simeq K G$
\end{proof}

This will help us to prove that
\begin{theorem}
    Let $K^\alpha G$ be a twisted group algebra whit $G$ finite, and let $I\triangleleft K^\alpha G$ of dimension $2$. Then $I$ is equivalent to some ideal in a commutative group ring
over $K$ .
\end{theorem}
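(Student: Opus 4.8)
The plan is to use Lemma~\ref{LemmaScalar}: it suffices to exhibit a normal subgroup $N\trianglelefteq G$ with $G/N$ abelian that acts by scalar on $I$, since then, taking $H:=(G/N)\times B$ with $B$ any abelian group of order $|N|$ and $F:=\{1\}\times B\trianglelefteq H$, we have $|H|=|G|$ and $H/F\cong G/N$, so $I$ is equivalent to an ideal of the commutative group ring $KH$. Assume $I$ is two-sided (the one-sided cases being analogous). Identify $I$ with $K^{2}$ and let $\bar R_{\ell},\bar R_{r}\subseteq\mathrm{End}_{K}(I)\cong M_{2}(K)$ be the images of $K^{\alpha}G$ acting on $I$ by left and by right multiplication. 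The operators $\mu_{g}:=(x\mapsto\overline g\,x)|_{I}$ generate $\bar R_{\ell}$, are units (since $\mu_{g}\mu_{g^{-1}}=\alpha(g,g^{-1})\,\mathrm{id}_{I}$), and satisfy $\mu_{g}\mu_{h}=\alpha(g,h)\,\mu_{gh}$; and symmetrically for $\bar R_{r}$.

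First I would observe that at least one of $\bar R_{\ell},\bar R_{r}$ is commutative. Indeed, by associativity every element of $\bar R_{\ell}$ commutes with every element of $\bar R_{r}$, and a non-commutative unital subalgebra of $M_{2}(K)$ has centralizer $K\,\mathrm{id}$ (any non-scalar matrix $z$ has the two-dimensional, hence commutative, centralizer $K[z]$, so a non-scalar element commuting with such a subalgebra would force it to be commutative). Replacing left by right multiplication if necessary, I may therefore assume $\bar R_{\ell}$ is commutative.

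Then $\bar R_{\ell}^{*}/K^{*}$ is abelian and, because every $\alpha(g,h)$ lies in $K^{*}$, the map $\bar\mu\colon G\to\bar R_{\ell}^{*}/K^{*}$, $g\mapsto\mu_{g}K^{*}$, is a group homomorphism. I set $N:=\ker\bar\mu$: it is normal, $G/N$ is abelian, and every $\overline u$ with $u\in N$ acts on $I$ as a scalar $\lambda(u)\in K^{*}$, with $\lambda(u)\lambda(v)=\alpha(u,v)\lambda(uv)$, so $\lambda$ is an $\alpha$-homomorphism and $N$ acts by scalar on $I$ in the sense of the definition. Applying Lemma~\ref{LemmaScalar} as above then finishes the proof. (Alternatively, in the case $\bar R_{\ell}$ non-commutative one could avoid Lemma~\ref{LemmaScalar}: then $\bar R_{r}=K\,\mathrm{id}$, so $I$ contains one-dimensional one-sided ideals, and the argument of the one-dimensional case above---which uses only a relation $v\overline g=\mu(g)v$---forces $\alpha$ to be a coboundary, so $K^{\alpha}G\cong KG$ and $I$ is a two-dimensional $G$-code, whence Theorem~1 of \cite{GARCIAPILLADO2019167} applies.)

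The step I expect to be the main obstacle is the commutativity dichotomy for $\bar R_{\ell}$ and $\bar R_{r}$: applying Lemma~\ref{LemmaScalar} with an \emph{abelian} target group $H$ forces $G/N$ to be abelian, and that is only possible once one knows $G$ acts on $I$ through an essentially commutative ring. The remaining ingredients---the $M_{2}(K)$ centralizer computation, the verification that $\lambda$ is an $\alpha$-homomorphism, and the cocycle bookkeeping inside Lemma~\ref{LemmaScalar}---are routine.
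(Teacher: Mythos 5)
Your proof is correct and reaches the theorem's conclusion, but it gets there by a somewhat different mechanism than the paper. The paper splits on whether $I$ is simple: in the non-simple case it finds a one-dimensional subideal, concludes $\alpha$ is a coboundary and falls back on the untwisted result, while in the simple case it invokes Schur's lemma plus finiteness of $K$ (a finite division ring is a field) to get a commutative target for the action, deduces that the commutator subgroup $G'$ acts by scalars, and applies Lemma~\ref{LemmaScalar} with $N=G'$. You avoid both the case split and Wedderburn: since left and right multiplication on a two-sided ideal commute elementwise inside $\mathrm{End}_K(I)\cong M_2(K)$, the centralizer computation (a non-scalar $2\times 2$ matrix has centralizer $K[z]$) forces at least one of the two images to be commutative, and then $N:=\ker\bigl(G\to \bar R_\ell^{*}/K^{*}\bigr)$ acts by scalars with $G/N$ abelian. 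This is more elementary, works over any field, and your choice $H=(G/N)\times B$ with $B$ abelian genuinely guarantees a \emph{commutative} group ring, whereas the paper's $H=G'\times G/G'$ does so only when $G'$ is abelian (and your $N$ automatically contains $G'$). Two caveats: your parenthetical claim that the one-sided cases are ``analogous'' is not accurate for your argument, since the dichotomy needs both the left and the right action on $I$ (for a purely one-sided ideal, e.g.\ a two-dimensional minimal left ideal when $K^{\alpha}G\cong M_2(K)$, the single acting image can be all of $M_2(K)$); the paper's own proof likewise uses two-sidedness implicitly, so this does not affect the theorem as stated, but the remark should be dropped or justified separately. Second, when you swap left for right you are using the right-handed analogue of the scalar-action definition and of Lemma~\ref{LemmaScalar}, which the paper states and proves only for the left action; this is a routine symmetric rewrite (the paper commits the same sloppiness), but it should be said explicitly.
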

\begin{proof}
    If $I$ is not simple, then there exist a one dimensional ideal over $K^\alpha G$, and due to the previous lemma we have that $K^\alpha G \simeq KG$ and the result comes form the non twisted case.  
    
    Then by Schur's lemma  its endomorphism ring is a division ring $D$, that, as $K$ and $G$ is finite, $I$ is finite and $D$ is a field. Therefore, multiplication by $K^\alpha G$ defines an $K-$homomorphism over $D$, which we will denote by $\psi$. 
    
    As $D$ is commutative,  $G'$ acts by scalar. To see this, note that $x=\overline{g} \cdot \overline{h} \cdot \overline{g^{-1}} \cdot \overline{h^{-1}}$ satisfy $\psi(x) = \psi(g) \psi(h) \psi(g^{-1}) \psi(h^{-1}) = \psi(g) \psi(h) \psi(g) ^{-1} \psi(h)^{-1} = Id$, and 
    
    \[\psi(x) = \psi( \alpha(g,h)\alpha(gh,g^{-1})\alpha(ghg,h^{-1})\psi ( \overline{ghg^{-1}h^{-1})}\]
    so we have that
    
    \[\psi\left(\overline{ghg^{-1}h^{-1}}\right) ( a) = \frac{1}{\alpha(g,h)\alpha(gh,g^{-1})\alpha(ghg,h^{-1})} a\] 
    
    and $G'$ acts by scalar over $I$. Now, if we take $H=G' \times G/G'$, $N=F=G'$ we are under the conditions for lemma \ref{LemmaScalar} 

\end{proof}

Now, for dimension $3$ we need the following lemma from \cite{GARCIAPILLADO2019167}

\begin{lemma}[Lemma 2.3 \cite{GARCIAPILLADO2019167}]
    Let \( R \) be an \( F \)-algebra. Suppose that \( I \triangleleft R \) and
\(\dim_F(I) = 3\). If \( M \) is a two-dimensional simple submodule of \( IR \),
then \( M \) is a fully characteristic submodule of \( IR \)
(i.e.\ \( f(M) \subseteq M \) for any \( f \in \operatorname{End}(IR) \)),
in particular, \( M \subseteq R \).

If \( IR/N \) is a two-dimensional simple factor module of \( IR \) for some
submodule \( N \), then \( N \) is a fully characteristic submodule of \( IR \),
in particular, \( N \triangleleft R \).

\end{lemma}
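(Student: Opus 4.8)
The plan is to prove both assertions by one and the same elementary argument: Schur-type simplicity together with a dimension count carried out inside the three-dimensional module $IR$. I will write $\operatorname{End}(IR)$ for the ring of right $R$-module endomorphisms of $I$. The bridge between ``fully characteristic'' and ``ideal of $R$'' that I would set up first is the remark that, for every $r\in R$, left multiplication $\ell_r\colon x\mapsto rx$ belongs to $\operatorname{End}(IR)$: it is $F$-linear, it commutes with right multiplication, and it carries $I$ into $I$ precisely because $I$ is a two-sided ideal. Consequently, a submodule of $IR$ that is invariant under all of $\operatorname{End}(IR)$ is stable under every $\ell_r$, hence is a left ideal, and, being also a right submodule, is two-sided. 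This is exactly what upgrades ``$M$ fully characteristic'' to ``$M\triangleleft R$'', and likewise for $N$.

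For the submodule statement I would fix $f\in\operatorname{End}(IR)$ and a simple submodule $M$ with $\dim_F M=2$. Since $M$ is simple, $f|_M$ is either zero --- and then $f(M)=0\subseteq M$ --- or injective, in which case $f(M)$ is again a two-dimensional simple submodule of $IR$. In the injective case, $M\cap f(M)$ is a submodule of the simple module $M$, hence equals $0$ or $M$; if it were $0$ then $\dim_F\bigl(M+f(M)\bigr)=4$, which is impossible inside the three-dimensional space $IR$, so $M\cap f(M)=M$, i.e.\ $M\subseteq f(M)$, and equality of dimensions forces $f(M)=M$. Thus $M$ is fully characteristic, and by the remark above $M\triangleleft R$.

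For the quotient statement, from $\dim_F(IR/N)=2$ I would first record that $\dim_F N=1$ and that $N$ is a maximal submodule of $IR$ (the quotient being simple, nothing lies strictly between $N$ and $IR$, so the only submodules of $IR$ containing $N$ are $N$ and $IR$, of dimensions $1$ and $3$). Given $f\in\operatorname{End}(IR)$ and the quotient map $q\colon IR\to IR/N$, the composite $q\circ f$ has image a submodule of the simple module $IR/N$, hence $0$ or all of it. If the image is $0$ then $f(IR)\subseteq N$, so in particular $f(N)\subseteq N$. If the image is everything, then $N':=\ker(q\circ f)$ is one-dimensional, and it suffices to show $N'=N$, for then $f(N)\subseteq N'=N$: if $N'\neq N$, then, both being one-dimensional and hence simple, $N\cap N'=0$, so $N+N'$ is a two-dimensional submodule of $IR$ strictly containing $N$, contradicting the maximality of $N$. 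Hence $f(N)\subseteq N$ in all cases, $N$ is fully characteristic, and as before $N\triangleleft R$.

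I do not expect a genuine obstacle here; the two points that demand care are (i) checking that left multiplications really do give right-module endomorphisms of $IR$, which is exactly where the hypothesis that $I$ is two-sided enters, and (ii) the bookkeeping that a two-dimensional simple quotient of a three-dimensional module has a one-dimensional kernel with no submodule strictly between it and the whole module. Everything else is the identity $\dim(U+V)=\dim U+\dim V-\dim(U\cap V)$ together with the fact that a submodule of a simple module is $0$ or everything.
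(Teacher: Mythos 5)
Your proof is correct. Note that the paper itself does not prove this lemma --- it is quoted verbatim from \cite{GARCIAPILLADO2019167} --- so there is no in-paper argument to compare against; your reasoning (left multiplications $\ell_r$ lie in $\operatorname{End}(IR)$ because $I$ is two-sided, plus Schur-type simplicity and the dimension count $2+2>3$, resp.\ maximality of the one-dimensional kernel $N$) is the standard argument for this statement and also clarifies the intended reading of the misprint ``$M \subseteq R$'' as $M \triangleleft R$.
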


Now, we can prove

\begin{theorem}
    Let $K^\alpha G$ be a twisted group ring with $G$ finite and let $I\triangleleft K^\alpha G$ be an ideal of dimension $3$ over $K$. Then $I$ is equivalent to an abelian group code. 
\end{theorem}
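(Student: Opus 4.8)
The plan is to deduce the statement from Theorem~1 of \cite{GARCIAPILLADO2019167} (the untwisted case) together with Lemma~\ref{LemmaScalar}, via the dichotomy: either the $2$-cocycle $\alpha$ is a coboundary, in which case $K^\alpha G\cong KG$ and that theorem applies verbatim, or $\alpha$ is not a coboundary, in which case $I$ is forced to be simple and the commutator argument already used above for dimension $2$ applies.

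The reason $I$ must be simple when $\alpha$ is not a coboundary is an observation about one-dimensional modules: a one-dimensional right $K^\alpha G$-module is exactly a $K$-algebra homomorphism $\rho\colon K^\alpha G\to K$, and setting $\mu_g=\rho(\overline g)\in K^*$ and using $\overline g\,\overline h=\alpha(g,h)\overline{gh}$ gives $\mu_g\mu_h=\alpha(g,h)\mu_{gh}$, i.e.\ $\alpha$ is the coboundary of $\mu$. So if $\alpha$ is not a coboundary, $K^\alpha G$ has no one-dimensional module, hence every composition factor of $I$ has $K$-dimension at least $2$; since $\dim_K I=3$ this forces $I$ simple. (This is also how Lemma~2.3 of \cite{GARCIAPILLADO2019167} enters an argument in the style of \cite{GARCIAPILLADO2019167} that splits directly on the module structure of $I$: a two-dimensional simple factor module of $I$ would have a one-dimensional kernel which is an ideal, again forcing $K^\alpha G\cong KG$; and a two-dimensional simple submodule $M$ would be a two-dimensional ideal, handled by the previous theorem. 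The delicate point in that route is the submodule case, resolved by noting that such an $I$ is either the direct sum of $M$ and a one-dimensional submodule, or uniserial with $\operatorname{End}_{K^\alpha G}(I)$ a division ring; in both subcases every element of $G'$ acts on $I$ by the scalar $c(g,h)^{-1}$ appearing below.)

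So assume $\alpha$ is not a coboundary, so that $I$ is simple. Since $I\triangleleft K^\alpha G$ is two-sided, left multiplication by an element of $K^\alpha G$ is a right-module endomorphism of $I$, which gives a ring homomorphism $L\colon K^\alpha G\to\operatorname{End}_{K^\alpha G}(I)$; by Schur's Lemma the target is a division ring, finite (since $K$ and $G$ are), hence by Wedderburn's little theorem a field, in particular commutative. Thus the operators $L_{\overline g}$ pairwise commute, and since $\overline g\,\overline h\,\overline g^{-1}\overline h^{-1}=c(g,h)\,\overline{ghg^{-1}h^{-1}}$ for some $c(g,h)\in K^*$, the vanishing of the ring commutator $L_{\overline g}L_{\overline h}L_{\overline g}^{-1}L_{\overline h}^{-1}-\mathrm{id}$ shows that $\overline{ghg^{-1}h^{-1}}$ acts on $I$ by the scalar $c(g,h)^{-1}$. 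The elements of $G$ acting on $I$ by a scalar form a subgroup, hence contain $G'$, so $G'$ acts by scalar on $I$ with an associated $\alpha$-homomorphism $G'\to K^*$. Applying Lemma~\ref{LemmaScalar} with $N=G'$, with $H=(G/G')\times F$ for an arbitrary abelian group $F$ with $|F|=|G'|$, with $F\trianglelefteq H$ the second factor and $G/N\cong H/F$ the obvious isomorphism, gives that $I$ is equivalent to an ideal of $KH$, that is, to an abelian group code.

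I expect the crux to be the reduction to simplicity: the non-coboundary observation together with the composition-length count. After that, the argument is a transcription of the dimension-$2$ case, the only additional bookkeeping being the cocycle factor $c(g,h)$ when checking that $ghg^{-1}h^{-1}\mapsto c(g,h)^{-1}$ is a well-defined $\alpha$-homomorphism on $G'$.
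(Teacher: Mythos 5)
Your proof is correct, and it reorganizes the argument in a way that is genuinely different from (and somewhat cleaner than) the paper's. The paper argues by cases on the submodule structure of $I$: if $I$ is simple it runs the Schur--Wedderburn commutator argument; if $I$ contains a $1$-dimensional sub-ideal it invokes the dimension-$1$ theorem to get $K^\alpha G\simeq KG$; and if $I$ contains a $2$-dimensional sub-ideal $M$ it computes explicitly with a basis $v_1,v_2,v_3$, extracting the functional $\beta_3$ with $\alpha(g,h)\beta_3(gh)=\beta_3(g)\beta_3(h)$ and concluding that either $\alpha$ is a coboundary or one reaches a contradiction. Your dichotomy on the cohomology class of $\alpha$ subsumes both non-simple cases at once: the observation that a non-coboundary twisted group algebra admits no $1$-dimensional module, combined with the composition-length count ($3$ cannot be written as a sum of parts all $\ge 2$ except as $3$ itself), forces simplicity, and indeed the paper's $\beta_3$ relation is nothing but the $1$-dimensional quotient module $I/M$ seen coefficientwise, so the two treatments are mathematically equivalent but yours avoids the coordinate computation. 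In the simple case your argument coincides with the paper's dimension-$2$ argument (Schur, finiteness, Wedderburn's little theorem, scalar action of $G'$ via $\overline g\,\overline h\,\overline g^{-1}\overline h^{-1}=c(g,h)\overline{ghg^{-1}h^{-1}}$, then Lemma~\ref{LemmaScalar}); one genuine improvement is your choice $H=(G/G')\times F$ with $F$ an arbitrary abelian group of order $|G'|$, which guarantees $H$ abelian, whereas the paper's $H=G'\times G/G'$ is abelian only when $G'$ is. Two small caveats: in the coboundary branch the isomorphism $K^\alpha G\simeq KG$ is a diagonal rescaling of the basis, so you obtain equivalence (not permutation equivalence) to an abelian group code, which is what the theorem asserts and what the paper's notion of equivalence permits; and the parenthetical alternative route via Lemma 2.3 of \cite{GARCIAPILLADO2019167} is sketchier than your main line, but nothing in your proof depends on it.
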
 
\begin{proof}
    If $I$ is simple, then the same argument as in dim $2$ remain valid. If $I$ has a 1 dimensional ideal then it is also true as then $K^\alpha G \simeq KG$ and the group ring case prove it. So it remain to prove the case when it has a $2$ dimensional sub ideal.  

    If $I$ has a $2-$dimensional sub ideal $M \triangleleft I$, let $v_1,v_2,v_3 \in I$ be a base of $I$ with $v_1,v_2 \in M$. Then we have that $\overline{g} v_1,\overline{g} v_2 \in M $ for all $g\in G$ and $\overline{g} v_3 = \beta_1(g) v_1 + \beta_2(g) v_2 + \beta_3(g) v_3$. If we see the component of $v_3$ we have that $\alpha(g,h)\overline{gh} v_3 = \overline{g} \cdot \overline{h} v_3$ so $\alpha(g,h) \beta_3(gh) =  \beta_3(g)  \beta_3(h)$ with $\beta_3 \in \mathbb{F}_p^n$ for all $g,h \in G$. 

    If $\beta_3 \not = 0 $ for all $g$ then $\alpha$ is a $2-$coboundary and $K^\alpha G \simeq KG$ so the statement is true. Now, if $\beta_3(g) =0$ for some $g$, then as $\alpha(g,h) \beta_3(gh) =  \beta_3(g)  \beta_3(h) = 0$ and $\alpha(g,h)\not = 0$ we have that $\beta_3(gh)=0$ for all $h$, so $\beta_3(h) =0$ for all $h\in G$, so $1v_3 \in <v_1,v_2> = M$ absurd as they are linearly independent
\end{proof}

\section{Bounds on distance and dimension of a group code} 
The next result was first proved for group codes in \cite{Borello2022Ideals}. An extension to twisted group codes has been given proved in \cite{WillemsTwistedToAppear}, and for twisted skew group ring, and the presented version in this paper, in \cite{Behajaina2024Twisted}. 

\begin{theorem} [Theorem 2.4, \cite{Behajaina2024Twisted}]
    If $0 \neq C \le K^\alpha[G,\sigma] = R$ is a twisted skew group code
of minimum distance $d(C)$, then
\[
|G| \le d(C)\cdot \dim C.
\]

\end{theorem}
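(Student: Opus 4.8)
The strategy is to mimic the argument from \cite{Borello2022Ideals} and \cite{WillemsTwistedToAppear}, reducing the bound to a statement about the action of $G$ on the support of a minimum-weight codeword. First I would pick a nonzero codeword $c = \sum_{g \in G} c_g \overline{g} \in C$ of minimum weight $d = d(C)$, and let $T = \supp(c) = \{g \in G : c_g \neq 0\}$, so $|T| = d$. The key observation is that for every $x \in G$, the element $\overline{x} \cdot c$ again lies in $C$ and, because $\overline{x}$ acts on the basis $\{\overline{g}\}$ by $\overline{x}\,\overline{g} = \alpha(x,g)\overline{xg}$ (up to the twist $\sigma$, which only rescales coefficients by a unit and permutes nothing), we have $\supp(\overline{x} c) = xT$. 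In particular $\overline{x} c$ is again a nonzero codeword of weight exactly $d$, so $|xT| = d$ for all $x$.

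\medskip

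Next I would consider the set $S = \{x \in G : xT = T\}$, the setwise stabilizer of $T$ under left translation; this is a subgroup of $G$, and $T$ is a union of right cosets of $S$, hence $|S|$ divides $d = |T|$. The plan is to show that the $K$-span $V$ of $\{\overline{x} c : x \in G\}$ — which is a left subcode of $C$, hence has $\dim_K V \le \dim_K C$ — has dimension at least $|G|/|S|$. For this, note that the distinct translates $xT$ as $x$ ranges over coset representatives of $S$ in $G$ are the reason for linear independence: if $x_1 S, \dots, x_m S$ are the distinct cosets (so $m = |G|/|S|$) then the supports $x_1 T, \dots, x_m T$ need not be disjoint, but one can order them and use a ``leading term'' / triangularity argument — for instance, pick in each $x_i T$ an element that is not covered by later translates, or more robustly argue that the $|G| \times |G|$ matrix whose rows are the coordinate vectors of the $\overline{x} c$ has rank $\ge |G|/|S|$ because it is, up to scaling rows and columns by units, the ``translation'' incidence pattern of $T$ in $G$. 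Combining $|G|/|S| \le \dim_K C$ with $|S| \cdot (d/|S|)$ hmm — more directly, from $|S| \mid d$ and $|G| = m|S| \le m \cdot d$ we would be done once $m \le \dim_K C$ but we actually want $|G| \le d \cdot \dim C$, which follows from $|G| = m |S|$ and $|S| \le d$ only if $m \le \dim C$; so the real content is $m = |G|/|S| \le \dim_K C$, i.e. the span of the translates has dimension at least the number of translates $|G:S|$.

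\medskip

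The main obstacle I expect is precisely this linear-independence / rank lower bound: showing that $\dim_K \mathrm{span}_K\{\overline{x} c : x \in G\} \ge |G:S|$. In the untwisted group-algebra case this is handled by a clean argument (the translates of the indicator-supported vector span a space of the right dimension because left translation acts freely on cosets of $S$), and the twist $\alpha$ only contributes nonzero scalar factors $\alpha(x,g)$ and the Frobenius twist $\sigma$ contributes further unit rescalings of individual coefficients, neither of which can destroy rank — so the argument should transfer after carefully bookkeeping the cocycle factors, exactly as the coefficient-manipulation identities in the proof of Lemma~\ref{LemmaScalar} were used. Concretely, I would fix coset representatives $x_1, \dots, x_m$ of $S$ in $G$, observe that the coordinate supports $x_i T$ are pairwise distinct subsets of $G$, and then show the corresponding vectors are $K$-linearly independent by choosing, for a suitable ordering, a position witnessing each one (e.g. using that a finite family of distinct subsets of a ground set always admits a system of ``distinguishing'' coordinates after deleting redundancies, or by an induction on $m$ removing a translate that contains a point lying in no other). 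Once that rank bound is in hand, the chain $|G| = |S| \cdot m \le d \cdot \dim_K C$ closes the proof, and the case of the characterization of equality (deferred, presumably, to a subsequent statement) would hinge on when all these inequalities are tight, namely $|S| = d$ and the translates spanning exactly $C$.
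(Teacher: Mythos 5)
Your reduction to translates of a minimum-weight codeword is the right starting point, but the central claim of the proposal --- that the span of the translates $\overline{x}c$, $x \in G$, has dimension at least $[G:S]$, where $S$ is the setwise stabilizer of $T=\supp(c)$ --- is false, and with it the chain $|G| = [G:S]\cdot|S| \le \dim_K C \cdot d$ collapses. Distinctness of the supports $xT$ does not give linear independence, and the number of distinct translates can strictly exceed $\dim_K C$. Concretely, take $R=\mathbb{F}_2 C_3$ (trivial cocycle and trivial action, a special case of the statement), $C$ the augmentation ideal and $c=\overline{1}+\overline{g}$. Then $\wt(c)=d(C)=2$, the stabilizer of $T=\{1,g\}$ is trivial, so $[G:S]=3$; yet $c+\overline{g}c+\overline{g^2}c=0$, so the three distinct translates span only the $2$-dimensional code $C$. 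In the same example no ``system of distinguishing coordinates'' for the full family of distinct translates exists, and the incidence-matrix rank claim fails too, so the hedged triangularity argument cannot be repaired while keeping the count $[G:S]$.

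What does work --- and is how the paper argues, following \cite{Borello2022Ideals} via Lemma~\ref{lemmasequence} and Theorem~\ref{teoremaSuppRank} --- is to drop the stabilizer entirely and use the support $S=\supp(c)$ itself, with $|S|=d$: choose greedily a maximal sequence $g_1,\dots,g_t$ in $G$ such that $Sg_i \not\subseteq \bigcup_{j<i}Sg_j$ for each $i$. The codewords $c\,\overline{g_i}$ (which lie in $C$, with supports $Sg_i$ since the cocycle values and the $\sigma$-twists are units) are then linearly independent: in a nontrivial vanishing combination, the largest index $r$ with $\lambda_r\neq 0$ yields a nonzero coefficient at any $h\in Sg_r\setminus\bigcup_{j<r}Sg_j$, a contradiction. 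Maximality of the sequence forces $\bigcup_{i\le t} Sg_i = G$, hence $t \ge |G|/|S| = |G|/d$, and therefore $\dim_K C \ge \dim_K(cR) \ge t \ge |G|/d$, which is the bound. The point missing in your plan is that the translates must be selected by this covering condition: it simultaneously supplies the independence certificate (a fresh coordinate for the last chosen translate) and the counting bound $t\ge |G|/d$, whereas counting all distinct translates via the stabilizer overshoots the dimension of the code. The stabilizer-type subgroup only reappears later, in the characterization of when equality holds.
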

In addition, in \cite{Borello2022Ideals} group codes where the equality hold have been charactericed.
\begin{theorem}[Theorem 2.10., \cite{Borello2022Ideals}]
    A $G$-code $C$ satisfies $d(C)\cdot \dim C = |G|$ if and only if there exist
$H \le G$ and $c \in K H$ such that $|H| = d(C)$, $c K H$ has dimension $1$,
and $C = c K G$.
\end{theorem}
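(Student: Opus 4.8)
The plan is to prove the two implications separately: the reverse implication is a short bookkeeping computation, while the forward one, $d(C)\dim_K C=|G|\Rightarrow$ the stated structure, carries the content and is built around a minimum-weight codeword together with a tight rank estimate. Throughout I regard a $G$-code as a right ideal of $KG$, the left-ideal case being symmetric.

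For $(\Leftarrow)$: suppose $H\le G$ and $c\in KH$ with $\dim_K cKH=1$ and $C=cKG$. Fixing representatives $g_1,\dots,g_m$ of $H\backslash G$ we have $KG=\bigoplus_i KHg_i$ as a left $KH$-module, so $C=cKG=\bigoplus_i (cKH)g_i$; since $cKH=Kc$ and the summands $(cKH)g_i$ are supported on the pairwise disjoint cosets $Hg_i$, this sum is direct, hence $\dim_K C=m=|G|/|H|$, and every codeword equals $\sum_i\mu_i\,cg_i$ with weight $|\{i:\mu_i\neq 0\}|\cdot\wt(c)$, so $d(C)=\wt(c)$. The hypothesis $|H|=d(C)$ then gives $d(C)\dim_K C=\wt(c)\cdot m=|H|\cdot m=|G|$.

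For $(\Rightarrow)$ I would argue in three steps. (1) Reduce to a cyclic code: fix $c\in C$ of minimum weight $d=d(C)$, with support $S$ and $|S|=d$. As $C$ is a right ideal, $cKG\subseteq C$, and $d=d(C)\le d(cKG)\le\wt(c)=d$ forces $d(cKG)=d$; feeding this into the bound $|G|\le d(cKG)\cdot\dim_K cKG$ stated above, together with $\dim_K cKG\le\dim_K C$, forces $|G|=d\cdot\dim_K cKG=d\cdot\dim_K C$, whence $cKG=C$ and $\dim_K C=|G|/d$. (2) Show the support is a coset of a subgroup (the crux): I would reprove the bound via the translate matrix $M=(c_{hg^{-1}})_{g,h\in G}$, whose rows are the vectors $cg$ and whose row space is $cKG$. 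Each row $cg$ has support $Sg$ of size $d$; choosing $r=\dim_K cKG$ rows of $M$ forming a basis of the row space, every row is a $K$-combination of those $r$, hence supported within the union $U$ of their supports, and since the union of all row supports is $G$ we get $G=U$, so $|G|\le r d$; because $r=|G|/d$ this is an equality, which can only happen if those $r$ basis rows $cg_1,\dots,cg_r$ have pairwise disjoint supports $Sg_1,\dots,Sg_r$ partitioning $G$. Now the code structure enters: for each $g\in G$, $cg\in C$ is a $K$-combination of the disjoint-support basis $\{cg_i\}$, and since $\wt(cg)=d$ equals the common size of the tiles, $cg$ must be a scalar multiple of a single $cg_j$ with $Sg=Sg_j$; hence right multiplication by $G$ permutes the tiles $\{Sg_i\}$, and transitively because $Sg_i\cdot(g_i^{-1}g_j)=Sg_j$. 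Writing $Sg_1$ for the tile containing the identity, its stabilizer $R=\{x\in G:Sg_1x=Sg_1\}$ is a subgroup, orbit--stabilizer gives $|R|=|G|/r=d$, and since $R$ acts freely on $Sg_1$ by right multiplication with $|R|=|Sg_1|$, the set $Sg_1$ is a single $R$-orbit, namely $R$ itself, so $H:=Sg_1$ is a subgroup of order $d$. (3) Conclude: replacing $c$ by $c':=cg_1\in C$, a minimum-weight codeword with support $H$, we get $c'\in KH$, $C=c'KG$ (since $c'KG=cKG$), and $|H|=d=d(C)$; finally $\dim_K c'KH=1$, because every nonzero element of $c'KH\subseteq C$ lies in $KH$, hence has weight $\le|H|=d$ and therefore exactly $d$ with full support $H$, so two $K$-independent such elements could be combined to kill a coordinate and produce a codeword of weight $<d$, which is impossible.

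I expect the main obstacle to be Step (2): passing from the purely combinatorial consequence of equality --- that the support of a minimum-weight codeword must tile $G$ --- to the algebraic conclusion that it is a coset of a subgroup, since arbitrary tiles of a group need not be cosets. The extra ingredient that makes this go through is the closure of $C$ under the $G$-action, not merely its dimension, which forces $G$ to act transitively on the tiles; getting the equality case of the rank estimate exactly right, and choosing the translate of $c$ at the end so that its support is literally the subgroup rather than just a coset, are the remaining places where care is needed.
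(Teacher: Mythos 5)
Your proof is correct, and its skeleton matches the paper's proof of the (twisted) generalization: the easy direction via the coset decomposition $C=\bigoplus_i (cKH)g_i$, and the hard direction by taking a minimum-weight $c$, using the bound $|G|\le d\cdot\dim cKG$ to force $C=cKG$, showing the support of a suitable translate of $c$ is a subgroup $H$, and finally deducing $\dim cKH=1$. Where you genuinely diverge is in the crux step and the finish. The paper (following Borello--Willems) normalizes $1\in H=\supp(c)$ and shows $H\cdot H\subseteq H$ directly: if $Hh\nsubseteq H$ one builds a maximal sequence of right $S$-rank $r\le t=|G|/|H|$ whose translates cover $G$ with an overlap, forcing $r>|G|/|H|$, a contradiction. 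You instead extract the equality case of the rank bound explicitly: a basis of translates $cg_1,\dots,cg_r$ whose supports tile $G$, then use minimality of weight to show every translate $cg$ is a scalar multiple of a single basis translate, so $G$ acts transitively on the tiles, and orbit--stabilizer identifies the tile through the identity with its stabilizer, a subgroup of order $d$. Your route is a bit longer but yields extra structure (the tiling and the fact that every translate of $c$ is, up to scalar, one of $r$ disjointly supported codewords), whereas the paper's counting argument is shorter and transfers verbatim to the twisted setting. Likewise, for $\dim c'KH=1$ you argue by weight cancellation (two independent full-support elements of $c'KH$ would produce a codeword of weight $<d$), while the paper just counts dimensions in $C=\bigoplus_i cK^\alpha Hg_i$ to get $\dim C=t\cdot\dim(cK^\alpha H)=t$; both are fine, and the paper's count is the one that generalizes most directly to the twisted case it actually proves.
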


Now, we will generalize their result to the case of twisted group algebras. Following \cite{FengHollmannXiang2019} we have

\begin{definition}
Let $G$ be a group, and $S$ be a nonempty subset of $G$. A sequence $g_1,\dots,g_t$ in $G$ has right $S$-rank $t$ if
\[
S g_i := \{ s g_i \mid s \in S \}
\]
is not contained in $\bigcup_{j<i} S g_j$ for all $i \in \{2,\dots,t\}$.
\end{definition}

Let $R$ be a finite $G-$graded $K-$algebra with $G$ a finite group. For any $f \in R$, we denote by $T_f \colon  C \to  C$ the map
$v \mapsto f v$. Then the following results from \cite{Borello2022Ideals} are true in our case

\begin{lemma} \label{lemmasequence} [Lemma 2.2, \cite{Borello2022Ideals}]
Let $0 \neq f \in R$ and $S = \supp(f)$. If there exists a sequence in $G$
with right $S$-rank $t$, then
\[
\dim f  R = \rank_K(T_f) \ge t.
\]
\end{lemma}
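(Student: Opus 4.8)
The plan is to exhibit inside $fR$ a set of $t$ elements that are linearly independent over $K$. Write $S=\supp(f)$ and let $g_1,\dots,g_t$ be a sequence in $G$ of right $S$-rank $t$. The candidates are $w_i:=f\,\overline{g_i}\in fR$ for $i=1,\dots,t$. Recall first that $\rank_K(T_f)=\dim_K\operatorname{im}(T_f)=\dim_K fR$ directly from the definitions, so it suffices to prove $\dim_K fR\ge t$, i.e.\ that $w_1,\dots,w_t$ are $K$-linearly independent.

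The one structural fact I would isolate is how supports behave under right multiplication by a basis vector. Writing $f=\sum_{s\in S}a_s\overline{s}$ with all $a_s\in K^{*}$, the multiplication rule of $R=K^\alpha[G,\sigma]$ gives
\[
w_i=f\,\overline{g_i}=\sum_{s\in S}a_s\,\alpha(s,g_i)\,\overline{sg_i},
\]
and each coefficient $a_s\alpha(s,g_i)$, being a product of nonzero field elements (the values of the $2$-cocycle are units), is again nonzero. Hence $\supp(w_i)=Sg_i$ for every $i$; in particular $w_i\neq 0$ since $S\neq\emptyset$.

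Now suppose $\sum_{i=1}^t c_i w_i=0$ with $c_1,\dots,c_t\in K$ not all zero, and let $\ell$ be the largest index with $c_\ell\neq 0$. Then $w_\ell=-c_\ell^{-1}\sum_{i<\ell}c_i w_i$, whence
\[
Sg_\ell=\supp(w_\ell)\subseteq\bigcup_{i<\ell}\supp(w_i)=\bigcup_{i<\ell}Sg_i .
\]
If $\ell\ge 2$ this contradicts the definition of right $S$-rank $t$; if $\ell=1$ it forces $w_1=f\,\overline{g_1}=0$, contradicting $\supp(w_1)=Sg_1\neq\emptyset$. Hence $w_1,\dots,w_t$ are linearly independent, so $\dim_K fR=\rank_K(T_f)\ge t$.

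I do not expect a genuine obstacle here: this is Lemma~2.2 of \cite{Borello2022Ideals} transported to the twisted skew setting, and the only point deserving a word of care is that passing from $KG$ to $K^\alpha[G,\sigma]$ does not disturb the support bookkeeping. The cocycle $\alpha$ and the action $\sigma$ merely rescale (respectively, twist the coefficients of) the basis vectors $\overline{g}$, so they can neither annihilate a basis vector nor identify two distinct ones; consequently supports transform under right multiplication exactly as in the group-algebra case. One should also note, in order to write $w_i$ in the displayed form, that each homogeneous component of $R$ is the one-dimensional space $K\overline{g_i}$ spanned by a non-zero-divisor — precisely the feature of twisted skew group rings (as opposed to arbitrary $G$-graded $K$-algebras) that keeps the argument clean.
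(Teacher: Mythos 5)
Your proof is correct and follows essentially the same route as the paper's: show that $f\overline{g_1},\dots,f\overline{g_t}$ are $K$-linearly independent by taking the largest index with nonzero coefficient in a putative dependence and using an element of $Sg_r$ not covered by the earlier $Sg_j$. Your explicit check that the cocycle values (being units) preserve $\supp(f\overline{g_i})=Sg_i$ is a welcome, if minor, refinement of the same argument in the twisted skew setting.
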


\begin{comment}

\begin{proof}
Let $g_1,\dots,g_t$ be a sequence in $G$ of right $S$-rank $t$.
We aim to show that $T_f(g_1),\dots,T_f(g_t) \in  R$ are linearly independent
over $K$.

Suppose by contradiction that there exists
$(\lambda_1,\dots,\lambda_t) \in K^t \setminus \{0\}$ such that
\[
\sum_{i=1}^t \lambda_i T_f(g_i) = 0.
\]
Let
\[
r := \max \{ i \in \{1,\dots,t\} \mid \lambda_i \neq 0 \},
\]
so that $\sum_{i=1}^t \lambda_i T_f(g_i)$ is a linear combination of
$\bigcup_{i=1}^r S g_i$.
Let $h \in S g_r \setminus \bigcup_{j<r} S g_j$.
Then the coefficient of $h$ in
$\sum_{i=1}^t \lambda_i T_f(g_i) \in  R$ is $\lambda_r \neq 0$,
a contradiction.
\end{proof}
\end{comment}

\begin{theorem} \label{teoremaSuppRank} [Lemma 2.4, \cite{Borello2022Ideals}]
For any $f \in  R$, define $T_f \colon  R \to  R$ by $v \mapsto f v$.
Then
\[
|\supp(f)| \cdot \rank_K(T_f) \ge |G|.
\]
\end{theorem}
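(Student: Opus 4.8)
The plan is to deduce this from Lemma~\ref{lemmasequence} together with a greedy covering argument in the group $G$. Write $S = \supp(f)$, which is a nonempty subset of $G$ since $f \neq 0$. By Lemma~\ref{lemmasequence} it suffices to exhibit a sequence $g_1,\dots,g_t$ in $G$ of right $S$-rank $t$ with $t \geq |G|/|S|$; indeed, for such a sequence Lemma~\ref{lemmasequence} gives $\rank_K(T_f) = \dim_K(fR) \geq t \geq |G|/|S|$, which rearranges to the claimed inequality.

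To build the sequence I would proceed greedily. Fix once and for all some $s_0 \in S$, and pick any $g_1 \in G$. Having chosen $g_1,\dots,g_{i-1}$, set $U_{i-1} := \bigcup_{j<i} S g_j \subseteq G$. If $U_{i-1} = G$, stop. Otherwise choose $h \in G \setminus U_{i-1}$ and put $g_i := s_0^{-1} h$; then $s_0 g_i = h \in S g_i \setminus U_{i-1}$, so $S g_i \not\subseteq U_{i-1}$, which is precisely the condition required for the sequence to have right $S$-rank at the $i$-th step. Since $G$ is finite and each new translate $S g_i$ contributes at least the element $h \notin U_{i-1}$, the sets $U_i$ are strictly increasing, so the process terminates after finitely many steps, say with $g_1,\dots,g_t$ and $U_t = \bigcup_{j=1}^t S g_j = G$.

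It then remains to count. Each translate $S g_j$ has cardinality exactly $|S|$, so
\[
|G| = |U_t| = \Bigl| \bigcup_{j=1}^t S g_j \Bigr| \leq \sum_{j=1}^t |S g_j| = t\,|S|,
\]
whence $t \geq |G|/|S|$. Applying Lemma~\ref{lemmasequence} to the sequence $g_1,\dots,g_t$, which has right $S$-rank $t$ by construction, we obtain
\[
|\supp(f)| \cdot \rank_K(T_f) = |S| \cdot \rank_K(T_f) \geq |S| \cdot t \geq |G|,
\]
as desired.

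I do not expect a genuine obstacle here: the entire substance is carried by Lemma~\ref{lemmasequence}, and the only points that need a little care are verifying that the greedy choice $g_i = s_0^{-1} h$ really yields a right $S$-rank sequence (it does, since $s_0 g_i = h$ lies outside all previous translates) and that the termination-and-counting step is clean. Everything added beyond the cited lemma is elementary finite combinatorics on the translates $S g$.
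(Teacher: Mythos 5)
Your proposal is correct and essentially the same as the paper's argument: the paper takes a sequence of maximal right $S$-rank $t$, observes that by maximality the translates $Sg_1,\dots,Sg_t$ must cover $G$, deduces $t \ge |G|/|S|$ by counting, and concludes via Lemma~\ref{lemmasequence}, which is exactly what your greedy construction produces explicitly. The only difference is presentational (constructing the covering sequence step by step rather than invoking maximality), so there is nothing to add.
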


\begin{comment}

\begin{proof}
Let $S := \supp(f) \subseteq G$.
Let $t \ge 1$ be the maximum size of a sequence $g_1,\dots,g_t \in G$
with right $S$-rank $t$.
By the maximality of $t$, we have
\[
\bigcup_{i=1}^t S g_i = G,
\]
and hence $t \ge |G|/|S|$.
The claim now follows from Lemma \ref{lemmasequence}.
\end{proof}
\end{comment}

\begin{corollary} \label{Cota inferior}
For any nonzero ideal $C\triangleleft R$, we have
\[
d(C)\cdot \dim C \ge |G|.
\]
In particular,
\[
2\sqrt{|G|} \le d(C) + \dim C \le |G| + 1.
\]
\end{corollary}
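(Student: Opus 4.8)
The plan is to obtain the first inequality directly from Theorem~\ref{teoremaSuppRank} by choosing $f$ appropriately, and then to derive the two-sided bound on $d(C)+\dim C$ from the product bound by an elementary optimization argument.

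\medskip

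\noindent\textbf{Step 1: the product bound.}
Let $C\triangleleft R$ be a nonzero ideal and pick a nonzero codeword $f\in C$ of minimal weight, so $|\supp(f)|=d(C)$. Since $C$ is a right ideal, $fR\subseteq C$, hence $\dim fR\le \dim C$. The map $T_f\colon R\to R$, $v\mapsto fv$, has image $fR$, so $\rank_K(T_f)=\dim fR\le \dim C$. Applying Theorem~\ref{teoremaSuppRank} to this $f$ gives
\[
|G|\le |\supp(f)|\cdot \rank_K(T_f) = d(C)\cdot \rank_K(T_f) \le d(C)\cdot \dim C,
\]
which is the claimed inequality $d(C)\cdot\dim C\ge |G|$.

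\medskip

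\noindent\textbf{Step 2: the lower bound $2\sqrt{|G|}\le d(C)+\dim C$.}
Write $a=d(C)$ and $b=\dim C$, both positive integers with $ab\ge |G|$. By the AM--GM inequality,
\[
d(C)+\dim C = a+b \ge 2\sqrt{ab} \ge 2\sqrt{|G|}.
\]

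\medskip

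\noindent\textbf{Step 3: the upper bound $d(C)+\dim C\le |G|+1$.}
Both $d(C)$ and $\dim C$ are bounded above by $|G|$: indeed $C\subseteq K^\alpha[G,\sigma]=R$ has $K$-dimension at most $\dim_K R=|G|$, and every nonzero codeword has weight at most $|G|$. A nonzero ideal of dimension $|G|$ is all of $R$, which contains the identity, a codeword of weight $1$; conversely if $C$ contains a codeword of weight $|G|$ this does not by itself force $\dim C$ small, so one argues as follows. If $\dim C=|G|$ then $C=R$ and $d(C)=1$, giving $d(C)+\dim C=|G|+1$. If $\dim C\le |G|-1$, then since also $d(C)\le |G|$ we would need a finer estimate; instead note that if $d(C)=|G|$ then every nonzero codeword has full support, and in particular the Singleton-type bound or a direct argument shows $\dim C=1$ (a classical fact: a code all of whose nonzero words have full weight has dimension one over a field, as any two codewords are scalar multiples after clearing a common zero pattern). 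Hence in every case $d(C)+\dim C\le |G|+1$, with equality iff $C=R$.

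\medskip

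\noindent\textbf{Main obstacle.}
Steps~1 and~2 are immediate from the cited results. The only point requiring care is the upper bound in Step~3: one must rule out the simultaneous occurrence of large $d(C)$ and large $\dim C$. The cleanest route is the observation that $d(C)=|G|$ forces $\dim C=1$; I expect the write-up to justify this by noting that if all nonzero codewords have support equal to $G$, then for any two nonzero $u,v\in C$ the codeword $u-\lambda v$ (with $\lambda$ chosen to kill one fixed coordinate) lies in $C$ but has a zero coordinate, hence is $0$, so $C$ is one-dimensional. Apart from this, the corollary is a formal consequence of Theorem~\ref{teoremaSuppRank}.
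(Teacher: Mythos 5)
Your Steps 1 and 2 coincide with the paper's argument: take $f\in C$ of minimal weight, note $fR\subseteq C$ so $\rank_K(T_f)=\dim fR\le\dim C$, apply Theorem~\ref{teoremaSuppRank}, and then AM--GM for the lower bound on $d(C)+\dim C$. That part is fine.

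Step 3, however, has a genuine gap. The upper bound $d(C)+\dim C\le|G|+1$ is exactly the Singleton bound for $C$ viewed as a linear code of length $|G|$ over $K$, namely $d(C)\le|G|-\dim C+1$, and this is what the paper invokes. Your replacement argument only treats the two extreme cases $\dim C=|G|$ (so $C=R$, $d(C)=1$) and $d(C)=|G|$ (so $\dim C=1$), and then concludes ``hence in every case.'' That conclusion does not follow: the cases with $1<d(C)<|G|$ and $1<\dim C<|G|$ are never addressed, and the crude bounds $d(C)\le|G|$, $\dim C\le|G|$ only give $d(C)+\dim C\le 2|G|-2$ there, which exceeds $|G|+1$ as soon as $|G|>3$. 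For instance, nothing in your argument rules out a hypothetical ideal with $\dim C=|G|-1$ and $d(C)=3$. You even flag that ``we would need a finer estimate'' in this range, but the finer estimate you then supply (full-support codewords force dimension one) applies only when $d(C)=|G|$. The fix is simply to cite the Singleton bound for linear codes of length $|G|$, which handles all dimensions uniformly; your observation about codes in which every nonzero word has full support is correct but is not needed once Singleton is available.
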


\begin{proof}
Let $f \in C$ be such that $d(C) = |\supp(f)|$.
Then the subcode $f  R$ has the same minimum distance as $C$,
but possibly a smaller dimension.
Since $\dim(f  R) = \rank_K(T_f)$,
the first claim follows from Theorem \ref{Cota inferior}.
The second claim is a consequence of the AM--GM inequality
and the Singleton bound.
\end{proof}

Note that group ring, twisted group ring and twisted skew group ring  are $G$-graded $K$-algebras, so all the previous results also apply to our case. In particular

\begin{lemma}
    If $R=\otimes_{g\in G} R_g$ is a strongly $G-$graded $K-$algebra with $\dim_K R_g =1$, then $R\simeq K^\alpha G$ for some  $2-$cocycle $\alpha :G\times G \longrightarrow K$.
\end{lemma}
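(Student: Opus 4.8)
The plan is to build the isomorphism explicitly: choose a homogeneous $K$-basis $\{u_g\}_{g\in G}$ of $R$ with exactly one basis vector $u_g$ spanning each line $R_g$, read off structure constants $\alpha(g,h)\in K$ from the products $u_gu_h$, verify that $\alpha$ is a normalized $2$-cocycle with values in $K^\times$, and then check that $\bar g\mapsto u_g$ extends to a $K$-algebra isomorphism $K^\alpha G\to R$.

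First I would record two standard facts about $G$-graded $K$-algebras. The identity is homogeneous of degree $e$: writing $1_R=\sum_g e_g$ with $e_g\in R_g$, for any homogeneous $r\in R_h$ one gets $r=1_R r=\sum_g e_g r$ with $e_g r\in R_{gh}$, forcing $e_g r=0$ for $g\neq e$; letting $r$ range over a homogeneous basis of $R$ gives $e_g=0$ for $g\neq e$, so $1_R\in R_e$. Since $\dim_K R_e=1$ this means $R_e=K\cdot 1_R$. Now for each $g\in G$ fix a $K$-basis vector $u_g$ of $R_g$, with the choice $u_e=1_R$.

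Next I would use strong gradedness to define $\alpha$. Since $R$ is strongly graded, $R_gR_h=R_{gh}$; as $R_g=Ku_g$ and $R_h=Ku_h$ this reads $K(u_gu_h)=R_{gh}=Ku_{gh}$, so $u_gu_h=\alpha(g,h)u_{gh}$ for some $\alpha(g,h)\in K$, and the reverse inclusion $u_{gh}\in K(u_gu_h)$ together with $R_{gh}$ being free of rank one over $K$ forces $\alpha(g,h)\in K^\times$. Associativity $(u_xu_y)u_z=u_x(u_yu_z)$ then expands to $\alpha(x,y)\alpha(xy,z)\,u_{xyz}=\alpha(y,z)\alpha(x,yz)\,u_{xyz}$, i.e. the cocycle identity $\alpha(x,y)\alpha(xy,z)=\alpha(y,z)\alpha(x,yz)$, while $u_eu_x=1_Ru_x=u_x$ and $u_xu_e=u_x$ give the normalization $\alpha(e,x)=\alpha(x,e)=1$. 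Observe that the skew twist $\sigma$ is trivial here: $K$ is central in $R$, so the left and right $K$-module structures on each $R_g$ agree, and the correct target is the twisted group algebra $K^\alpha G$ rather than a genuinely skew crossed product.

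Finally I would define $\varphi\colon K^\alpha G\to R$ as the $K$-linear extension of $\bar g\mapsto u_g$; it is multiplicative on basis elements by construction, $\varphi(\bar x\,\bar y)=\varphi(\alpha(x,y)\overline{xy})=\alpha(x,y)u_{xy}=u_xu_y=\varphi(\bar x)\varphi(\bar y)$, it is unital since $\varphi(\bar e)=u_e=1_R$, and it carries the $K$-basis $\{\bar g\}_{g\in G}$ of $K^\alpha G$ bijectively onto the $K$-basis $\{u_g\}_{g\in G}$ of $R$, hence is a $K$-algebra isomorphism (in fact an isomorphism of $G$-graded algebras). The one genuinely load-bearing step is the use of strong gradedness to guarantee $u_gu_h\neq 0$ and, more precisely, $\alpha(g,h)\in K^\times$: with only the weaker hypothesis $R_gR_h\subseteq R_{gh}$ the homogeneous products could degenerate and $\alpha$ need not be unit-valued, so that is exactly where the hypothesis must be spent; the rest is bookkeeping.
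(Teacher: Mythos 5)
Your proof is correct, and in fact the paper states this lemma without giving any proof at all (it is treated as a standard fact about strongly graded algebras with one-dimensional components), so your argument fills that gap rather than diverging from an existing one. The route you take is the standard one and each step checks out: homogeneity of $1_R$, hence $R_e=K\cdot 1_R$; strong gradedness forcing $u_gu_h$ to span $R_{gh}$ and thus $\alpha(g,h)\in K^\times$; associativity giving the cocycle identity (with trivial action, matching the paper's condition $\alpha(x,y)\alpha(xy,z)=\prescript{x}{}{\alpha(y,z)}\alpha(x,yz)$ since $K$ is central); the choice $u_e=1_R$ giving normalization; and $\bar g\mapsto u_g$ sending basis to basis, hence an isomorphism. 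The only micro-step you compress is the conclusion $e_g=0$ for $g\neq e$: from $e_g r=0$ for all homogeneous $r$ you should pass through $e_gR=0$ and then $e_g=e_g\cdot 1_R=0$, but that is a one-line fill-in. You also correctly identify where strong gradedness is genuinely spent, namely in guaranteeing that $\alpha$ is unit-valued; with only $R_gR_h\subseteq R_{gh}$ the conclusion would fail.
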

So the interesting case for our results in a code perspective are all twisted group rings. Now we can prove the following result

\begin{theorem}
Let $C$ be a twisted $G-$code
\[
d(C)\cdot \dim C = |G|
\]
if and only if there exist $H \le G$ and $c \in R|_H$ such that
$|H| = d(C)$, $c K^\alpha H$ has dimension $1$, and $C = c K^\alpha G$.
\end{theorem}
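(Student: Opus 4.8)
The plan is to prove the two directions separately, mirroring the structure of the untwisted case in \cite{Borello2022Ideals} but carrying the $2$-cocycle through every computation.

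\medskip

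\noindent\textbf{The easy direction ($\Leftarrow$).} Suppose $H \le G$ and $c \in R|_H$ with $|H| = d(C)$, $\dim_K\bigl(c K^\alpha H\bigr) = 1$, and $C = c K^\alpha G$. First I would note that $\supp(c) \subseteq H$, so $d(C) \le |H|$ by exhibiting $c$ itself as a codeword (here I must check $c \in C$, which is immediate since $C = cK^\alpha G \ni c\,\overline{1}$). For the reverse inequality, the key observation is that since $K^\alpha G$ is free as a right $K^\alpha H$-module on any transversal $g_1,\dots,g_s$ of $H$ in $G$, we get a direct-sum decomposition $C = c K^\alpha G = \bigoplus_{i=1}^s \bigl(c K^\alpha H\bigr)\overline{g_i}$, and each summand $(c K^\alpha H)\overline{g_i}$ is one-dimensional with support inside $H g_i$ — a right coset. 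These cosets are pairwise disjoint, so any nonzero codeword is a $K$-linear combination of the $s$ generators $c\,\overline{g_i}$, and its support is the disjoint union of the supports of the nonzero terms; hence $d(C) = \min_i |\supp(c\,\overline{g_i})|$. Since right multiplication by the unit $\overline{g_i}$ is a support-preserving bijection (it sends $\overline{h} \mapsto \alpha(h,g_i)\overline{h g_i}$ with $\alpha(h,g_i)$ a unit), $|\supp(c\,\overline{g_i})| = |\supp(c)|$ for all $i$, so $d(C) = |\supp(c)|$. Combined with $|\supp(c)| \le |H| = d(C)$ and the general bound $d(C)\cdot\dim C \ge |G|$ from Corollary~\ref{Cota inferior}, together with $\dim C = s = |G|/|H|$, the equality $d(C)\cdot\dim C = |G|$ follows.

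\medskip

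\noindent\textbf{The hard direction ($\Rightarrow$).} Assume $d(C)\cdot\dim C = |G|$. Pick $f \in C$ with $|\supp(f)| = d(C)$ and set $S := \supp(f)$. By Lemma~\ref{lemmasequence} and Theorem~\ref{teoremaSuppRank}, $\dim(fR) = \rank_K(T_f) \ge t$ where $t$ is the maximal length of a right $S$-rank sequence, and since $fR \subseteq C$ we get $|G| = d(C)\dim C \ge d(C)\dim(fR) \ge |S|\cdot t \ge |G|$. Hence equality propagates: $\dim(fR) = \dim C$ (so $C = fR$, as $fR \subseteq C$ with equal finite dimension), and $|S| \cdot t = |G|$ with $t = |G|/|S|$, meaning a \emph{maximal} right $S$-rank sequence $g_1,\dots,g_t$ has the cosets $S g_1,\dots,S g_t$ partitioning $G$. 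The combinatorial heart — and the main obstacle — is to upgrade this ``$S$ tiles $G$ by right translates'' statement into ``$S$ is a coset of a subgroup $H$''. I would argue: after scaling, assume $1 \in S$ (replace $f$ by $\overline{g}^{-1}f$ for some $g$ in the support, which rescales coefficients by units and permutes the support, preserving $d$ and the ideal $fR$ up to the left action — care is needed because left multiplication does not obviously fix $C$; instead one works with $fR$ directly and replaces $f$ by $\lambda^{-1}\overline{h}^{-1} \cdot (\text{a coefficient})$... more cleanly, exploit that $fR$ as a code depends only on $\supp$-structure). The cleanest route is the one in \cite{Borello2022Ideals}: show that $SS^{-1} \subseteq$ (a set of size $|S|$) forces $H := S S^{-1} = S^{-1} S$ to be a subgroup of order $|S|$ and $S = Hx$ for any $x \in S$; here one uses that $S g_1, \dots, S g_t$ partition $G$ together with $1 \in$ one of the blocks, and a counting/cancellation argument that is purely group-theoretic and \emph{cocycle-free}. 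Then $d(C) = |S| = |H|$.

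\medskip

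\noindent\textbf{Producing $c$ and $H$.} Having $S = Hx$, I would set $c := f\,\overline{x}^{-1}$ (using that $\overline{x}$ is a unit in $R$ with inverse $\overline{x}^{-1} = \bigl(\prescript{x^{-1}}{}{}(1/\alpha(x,x^{-1}))\bigr)\overline{x^{-1}}$ up to the scalar conventions established in Proposition~\ref{hatGisgroup}); then $\supp(c) = S x^{-1} = H$, so $c \in R|_H$, and $c K^\alpha G = f \overline{x}^{-1} K^\alpha G = f K^\alpha G = C$ since $\overline{x}^{-1}$ is a unit. It remains to check $\dim_K\bigl(c K^\alpha H\bigr) = 1$. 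Since $c \in K^\alpha H$ (the twisted group subalgebra on $H$, which is well-defined as $\alpha$ restricts to a $2$-cocycle on $H$) and $cK^\alpha G = \bigoplus_i cK^\alpha H \cdot \overline{g_i}$ over a transversal, we have $\dim C = |G/H| \cdot \dim(cK^\alpha H)$; but $\dim C = |G|/d(C) = |G|/|H| = |G/H|$, forcing $\dim(cK^\alpha H) = 1$. This closes the argument. The twisting affects only bookkeeping (every $\overline{g}$ is a unit, so supports transform bijectively and dimensions of one-dimensional-generated ideals are unaffected); the genuinely hard step is the tiling-implies-coset lemma, which I expect to lift verbatim from \cite{Borello2022Ideals} since it lives entirely in the grading group $G$ and never sees $\alpha$.
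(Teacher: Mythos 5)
Your proposal is correct and takes essentially the same approach as the paper: the backward direction via the decomposition $C=\bigoplus_i (cK^\alpha H)\,\overline{g_i}$ into one-dimensional summands supported on disjoint cosets, and the forward direction by taking a minimum-weight generator, forcing $C=cK^\alpha G$ from Corollary~\ref{Cota inferior}, showing the (right-translated) support is a subgroup via the purely group-theoretic tiling/maximal right-$S$-rank argument, and then concluding $\dim(cK^\alpha H)=1$ from the dimension count over a transversal. The only differences are presentational: the paper normalizes the support to contain $1$ by right multiplication by $\overline{h}^{-1}$ inside the right ideal (so the left-multiplication concern you raise never arises), and it writes out the tiling-implies-subgroup step that you defer to \cite{Borello2022Ideals} --- which is indeed cocycle-free, exactly as you predict.
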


\begin{proof}
Assume that there exist a subgroup $H \leq G$ and an element $c \in K^{\alpha}H$ satisfying the hypotheses of the statement. Then $C$ is the module induced from $cK^{\alpha}H$, regarded as a $K^{\alpha}H$-module, and it splits as a direct sum of precisely $[G : H]$ direct summands. Note that $\dim_K K^\alpha H = \dim_K K^\alpha Hg$ for all $g\in G$ so each summand is of dimension $1$.

Thus $\dim C = [G:H]$ and
\[
d(C)\cdot \dim C = |H|\cdot [G:H] = |G|.
\]

To prove the other direction, suppose that $d(C)\cdot \dim C = |G|$.
Let $c \in C$ be an element such that $\wt(c) = d(C)$ and consider
\[
C_0 := c K^\alpha G \le C.
\]
We have that $d(C_0) = d(C)$, and by Corollary \ref{Cota inferior}
\[
|G| \le d(C_0)\cdot \dim C_0
= d(C)\cdot \dim C_0
\le d(C)\cdot \dim C
= |G|.
\]
It follows that equality holds, and in particular $\dim C_0 = \dim C$. Therefore
\[
C = C_0 = c K^{\alpha}G.
\]

Now we will prove that there exist such $H$ as the theorem state. 
Let $H := \supp(c)$, so that $|H| = d(C)$.
Following the arguments and notation from the proof of Theorem \ref{Cota inferior}
with $f = c$, we obtain
\[
t = \frac{|G|}{|H|}
\quad\text{and}\quad
G = H g_1 \,\dot{\cup}\, \cdots \,\dot{\cup}\, H g_t
\]
for some $g_1,\dots,g_t \in G$.

Replacing $c$ by the minimum weight codeword $c h^{-1} \in C$ for some
$h \in \supp(c)$, we may assume that $1 \in H$.
Then for any $h \in H$ we have $h \in H \cap H h$.
We claim that this implies $H = H h$.

Suppose by contradiction that $H h \nsubseteq H$.
Choose a maximal sequence $h_1,\dots,h_r \in G$ with $r \ge 2$ such that
$h_1 = 1$, $h_2 = h$, and
\[
H h_i \nsubseteq \bigcup_{1 \le j < i} H h_j
\quad\text{for } 2 \le i \le r.
\]
By maximality,
\[
G = \bigcup_{1 \le j \le r} H h_j.
\]
By definition of $t$ as in the proof of Theorem \ref{Cota inferior}, we have $r \leq t $.
However, since $H \cap H h \neq \varnothing$, we obtain
\[
r > \frac{|G|}{|H|} = t,
\]
a contradiction, so $H h = H$ for all $h \in H$, and therefore $H \cdot H \subseteq H$.
Since $H$ is finite, this implies that $H$ is a subgroup of $G$ and
$t = [G:H]$.

Finally, using the decomposition
\[
C = c K^\alpha G
= c K^\alpha \Bigl( \bigsqcup_{i=1}^t H g_i \Bigr)
= \bigoplus_{i=1}^t (c K^\alpha Hg_i) ,
\]
in addition, as $\dim_K(cK^\alpha H )=\dim_K(cK^\alpha H g_i )$ because $\overline{g_i}$ is a unit, we obtain
\[
\dim C = t \cdot \dim(c K^\alpha H).
\]
By assumption,
\[
\dim C = \frac{|G|}{d(C)} = t,
\]
and hence $\dim(c K^\alpha H) = 1$.
This proves the claim.
\end{proof}

\section{Conclusion}
We have solved an open question concerning when a  twisted skew $G-$code is code checkable. We have also generalized several results about group ring to the case of twisted group ring, such as that all twisted $G-$codes of dimension up to 3 are equivalent to an abelian group code, and also generalize a bound on the dimension and the minimum distance of a twisted $G$-code.

\section{Acknowledgments}
This research was supported by the Spanish Ministry of Science, Innovation and Universities under the FPU 2023 grant program.

\bibliographystyle{plain}
\bibliography{Article}

@article{Berman1967,
  author  = {Berman, S. D.},
  title   = {On the Theory of Group Codes},
  journal = {Kibernetika},
  volume  = {3},
  pages   = {31--39},
  year    = {1967}
}

@article{BorelloDeLaCruzWillems2022,
  author       = {Borello, M. and de la Cruz, J. and  Willems, W.},
  title        = {On Checkable Group Codes},
  journal      = {Journal of Algebra and Its Applications},
  volume       = {21},
  year         = {2022},
  number       = {},
  pages        = {},
  doi          = {},
}

@article{BoucherGeiselmannUlmer2007,
  author  = {Boucher, D. and Geiselmann, W. and Ulmer, F.},
  title   = {Skew-Cyclic Codes},
  journal = {Applicable Algebra in Engineering, Communication and Computing},
  volume  = {18},
  pages   = {379--389},
  year    = {2007}
}

@article{Behajaina2024Twisted,
  author    = {Behajaina, A. and  Borello, M. and  de la Cruz J.},
  title     = {Twisted skew {G}-codes},
  journal   = {Designs, Codes and Cryptography},
  volume    = {92},
  pages     = {1803--1821},
  year      = {2024},
  doi       = {10.1007/s10623-024-01367-0},
  url       = {https://doi.org/10.1007/s10623-024-01367-0},
  publisher = {Springer}
}

@article{Borello2022Ideals,
  author  = {Borello, M. and  Willems, W. and Zini G.},
  title   = {On ideals in group algebras: an uncertainty principle and the {S}chur product},
  journal = {Forum Mathematicum},
  year    = {2022},
  doi     = {10.1515/forum-2022-0064},
  url     = {https://doi.org/10.1515/forum-2022-0064}
}

@article{deLaCruzWillems2021,
  author  = {de la Cruz, J. and Willems, W.},
  title   = {Twisted Group Codes},
  journal = {IEEE Transactions on Information Theory},
  volume  = {67},
  pages   = {5178--5184},
  year    = {2021}
}

@article{Hamming1950,
  author  = {Hamming, R. W.},
  title   = {Error Detecting and Error Correcting Codes},
  journal = {The Bell System Technical Journal},
  volume  = {26},
  pages   = {147--160},
  year    = {1950}
}

@book{Karpilovsky1989,
  author    = {Karpilovsky, G.},
  title     = {The Algebraic Structure of Crossed Products},
  series    = {North-Holland Mathematics Studies},
  volume    = {142},
  publisher = {Elsevier},
  year      = {1989}
}

@article{FengHollmannXiang2019,
  author  = {Feng, T. and Hollmann, H. D. and Xiang, Q.},
  title   = {The Shift Bound for Abelian Codes and Generalizations of the Donoho--Stark Uncertainty Principle},
  journal = {IEEE Transactions on Information Theory},
  volume  = {65},
  number  = {8},
  pages   = {4673--4682},
  year    = {2019},
  doi     = {10.1109/TIT.2019.2911681}
}

@article{GARCIAPILLADO2019167,
author = {Garcia C. and González S. and V Markov V. and Markova O. and Martínez C.},
title = {Group codes of dimension 2 and 3 are abelian},
journal = {Finite Fields and Their Applications},
volume = {55},
pages = {167-176},
year = {2019},
issn = {1071-5797},
doi = {https://doi.org/10.1016/j.ffa.2018.09.009},
url = {https://www.sciencedirect.com/science/article/pii/S1071579718301242},
keywords = {roup code, Abelian group code},
abstract = {Let F be a finite field and let G be a finite group. We show that if C is a G-code over F with dimF⁡(C)≤3 then C is an abelian group code. Since there exist non-abelian group codes of dimension 4 when charF>2 (see the examples in [1]), we conclude that the smallest dimension of a non-abelian group code over a finite field is 4.}
}

@misc{Grassl,
  author       = {Grassl, M.},
  title        = {Bounds on the Minimum Distance of Linear Codes},
  howpublished = {\url{http://www.codetables.de}},
  note         = {Online; accessed 23 December 2025}
}

@article{Shannon1948,
  author  = {Shannon, C. E.},
  title   = {A Mathematical Theory of Communication},
  journal = {The Bell System Technical Journal},
  volume  = {27},
  number  = {3},
  pages   = {379--423},
  year    = {1948},
  month   = jul
}

@article{WillemsTwistedToAppear,
  author  = {Willems, W.},
  title   = {Codes in twisted group algebras},
  journal = {To appear},
  note    = {To appear}
}

\end{document}